\documentclass[12pt,a4paper]{amsart}

\calclayout

\usepackage{xcolor}
\usepackage[lite]{amsrefs}
\usepackage{amssymb}
\usepackage[all,cmtip]{xy}

\usepackage{mathrsfs}
\usepackage{tabularx}
\usepackage{booktabs}
\usepackage[labelfont=bf,format=plain,justification=raggedright,singlelinecheck=false]{caption}
\usepackage{bookmark}
\usepackage{hyperref}
\usepackage{comment}
\usepackage{enumitem}

\newcommand{\F}{\mathbb{F}}

\newcommand{\C}{\mathcal{C}}

\newcommand{\Gal}{\mathrm{Gal}}

\newcommand{\PGL}{\mathrm{PGL}}
\newcommand{\PSL}{\mathrm{PSL}}
\newcommand{\aut}{\mathrm{Aut}}
\newcommand{\Fq}{\mathbb{F}_q}
\newcommand{\Fqq}{\mathbb{F}_{q^2}}

\newcommand{\mZ}{\mathbb{Z}}
\newcommand{\mN}{\mathbb{N}}

\newcommand{\tZ}{\widetilde{Z}}

\numberwithin{equation}{section}

\theoremstyle{plain}
\newtheorem{theorem}[equation]{Theorem}

\newtheorem{lemma}[equation]{Lemma}

\theoremstyle{definition}

\theoremstyle{remark}
\newtheorem{remark}[equation]{Remark}
\newtheorem{example}[equation]{Example}

\begin{document}

\title[Double A.-S. extensions with many lifted automorphisms]{Double Artin-Schreier extensions of rational function fields with many lifted automorphisms}

\author{Herivelto Borges}
\author{Jonathan Niemann}
\author{Giovanni Zini}

\begin{abstract}
In this paper we investigate algebraic function fields in positive characteristic mainly obtained as double Artin-Schreier extensions of rational function fields with a plane model. The goal is to extend to such extensions large automorphism groups of the rational function field. In this way, we construct some new families of ordinary function fields and determine their full automorphism groups. Such groups are large with respect to the genus, compared with the known upper bounds on the size of the automorphism group of an ordinary function field.
\end{abstract}

\maketitle

\section{Introduction}

An algebraic function field (of transcendence degree one) over an algebraically closed field $K$ is a field extension $F/K$ such that $F/K(x)$ is finite for some $x \in F$, where $x$ is transcendental over $K$. Given an irreducible 
algebraic curve over $K$, the rational functions on the curve form such a function field, and every algebraic function field arises in this way. We will mostly use the language of function fields, but our results could also be stated in terms of curves.

Automorphisms of algebraic function fields $F/K$, that is, field automorphisms of $F$ that fix the base field $K$ elementwise, have been studied extensively in the past decades. In particular, there has been a lot of work on relating the size of the automorphism group $\aut(F)$ with other invariants of the function field, such as the genus $g$ of $F$ and, when $F$ has positive characteristic $p$, the $p$-rank (or Hasse-Witt invariant) $\gamma$ of $F$.
If $F$ is rational or elliptic, then the automorphism group is infinite, so suppose from now on that $g\geq 2$. In this case, it is known that $\aut(F)$ is finite, and when $K = \mathbb{C}$ the classical Hurwitz bound (1893) states that
$$
    |\aut(F)| \leq 84(g-1).
$$
The bound fails in positive characteristic $p$; in this case, either 
$$
    |\aut(F)| < 8g^3,
$$
or $F$ is a function field from one of four specific families (see \cite{Stichtenoth_1973} and \cite{Henn_1978}). These four special types of function fields all have $p$-rank equal to zero. In fact, if the characteristic is odd and the genus is even then it is known (see \cites{Giulietti_Korchmaros_2019,Montanucci_2022}) that 
$$
    |\aut(F)| \geq 900g^2 \Longrightarrow \gamma = 0.
$$
As the $p$-rank always satisfies $0\leq\gamma\leq g$, it is natural to go to the other extreme: how large can the automorphism group be if the $p$-rank is as large as possible, i.e., if $\gamma=g$? In this case $F$ is called \emph{ordinary}, and it was shown by Nakajima in \cite{nakajima_p-ranks_1987} that 
$$
    |\aut(F)| \leq 84g(g-1)
$$
when $F$ is ordinary and $g\geq 2$. The bound has been refined, see \cite{Gunby_Smith_Yuan_2015} and \cite{Lia_Timpanella_2021}, however the order of magnitude is still $g^2$. 

When $p$ is odd and $g$ is even, there is an asymptotically sharper bound involving $g^{7/4}$: under this assumptions, it was shown in \cite{Montanucci_Speziali_odd_char_2020} that ordinary function fields satisfy
$$
    |\aut(F)| < 821.37 \cdot g^{7/4}.
$$
Under a different assumption, namely that the automorphism group of the ordinary function field is solvable, it is known that
$$
    |\aut(F)| \leq 35(g+1)^{3/2}
$$
in odd characteristic (see \cite{Korchmaros_Montanucci_2019}), and also in even characteristic when $g$ is even (see \cite{Montanucci_Sepziali_2019}). Moreover, there are examples of function fields attaining this bound up to the constant, see \cites{Cornelissen_2001,MZ_gen_Artin-Mumford, Korchmaros_Montanucci_Speziali_2018,Borges_Korchmaros_Speziali_2024}. 

To the best of our knowledge, there are no upper bounds on the size of $\aut(F)$ which are valid for every ordinary function field $F$ and sharp. However there are examples where the order of magnitude of $\aut(F)$ exceeds $g^{3/2}$. One such example is the Dickson-Guralnick-Zieve function field over the prime field $\mathbb{F}_p$, where the size of the automorphism group is roughly $g^{8/5}$, see \cite{Giulietti_Korchmaros_Timpanella_2019}. Another example, which we will return to later, is a function field in characteristic $2$ due to Zieve \cite{Zieve_private}, whose automorphism group has size roughly $g^{5/3}$. On the path to finding sharp upper bounds, it remains interesting in general to construct examples of function fields where the order of magnitude of the automorphism group is $g^{1 + \varepsilon}$, for some $\varepsilon > 0$.

In this paper, we study a certain type of algebraic function fields, namely those that can be obtained as a double, or in one case triple, Artin-Schreier extension over a rational function field. We will see several examples of both known and new ordinary function fields with large automorphisms groups that can be obtained in this way. We recover certain function fields, most notably what we call the Singer function field (which is related to a curve first introduced by Nakajima in \cite{nakajima_p-ranks_1987}), and the Zieve function field mentioned above; beside that, our main contribution is the construction and study of the so-called modified and extended Zieve function fields, as well as the determination of the full automorphism group for the Singer, Zieve and extended Zieve function fields.

The paper is organized as follows. In Section \ref{sec_preliminary} we collect some general results on double Artin-Schreier extensions. Section \ref{sec:conics} contains a study of the case where the right-hand sides of the two defining equations are coordinate functions of an irreducible conic. In particular, we obtain the Singer function field in this way. In Section \ref{sec:three_Zieve} we study three ordinary function fields related to a curve introduced by Zieve. Finally, we determine the full automorphism group of the Singer, Zieve and extended Zieve function fields in Section \ref{sec_aut_full}.

\section{Background and preliminary results}\label{sec_preliminary}


Let $q = p^n$ for some prime $p$ and $n \in \mZ_{\geq 0}$, and let $K = \overline{\mathbb{F}}_p$ be an algebraic closure of $\mathbb{F}_p$.
We investigate function fields of the type $F := K(y,z)$ where
\begin{align*}
    y^q - y &= u, \\
    z^q - z &= v, 
\end{align*}
and $K(u,v)$ is a rational function field defined by an equation 
$$
    f(u,v) = 0,
$$
where $f \in K[u,v]$ is irreducible.

Equivalently, the affine equation $f(U,V) = 0$ defines an irreducible plane curve, and $F$ is the function field of the irreducible curve $\C$ given by
\begin{align*}
\C: \begin{cases}
    Y^q-Y &= U, \\
    Z^q-Z &= V, \\
    f(U,V) &= 0. 
\end{cases}
\end{align*}

The automorphism group of such a function field, or curve, always contains an elementary abelian subgroup of order $q^2$, namely the $\Fq$-translations in $y$ and $z$. Moreover, in certain cases some automorphisms of the rational function field $K(u,v)$ can be ``lifted'' to $F=K(y,z)$, that is, extended to automorphisms of $F$. 

We consider only the case where $f(u,v)$ is chosen in such a way that $\lbrack F : K(u,v) \rbrack = q^2$ (see Lemma \ref{lem_one_eq_irr}). By the theory of Artin-Schreier extensions, the ramification in $F/K(u,v)$ depends on the poles of $u$ and $v$. If there is just one pole then the ramification is easy to determine, as the following lemma shows.

\begin{lemma}\label{lem_p_rank_0_one_pole}
    Suppose $u$ and $v$ have the same unique pole in $K(u,v)$, and denote this pole by $P_\infty$. If $\lbrack F : K(u,v) \rbrack = q^2$, then $P_\infty$ is totally ramified in $F/K(u,v)$, it is the only ramified place in $F/K(u,v)$, and the $p$-rank of $F$ is $0$.
\end{lemma}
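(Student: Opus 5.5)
The plan is to view $F/K(u,v)$ as the compositum of the $q^2$ intermediate Artin-Schreier extensions cut out by $\Fq$-linear combinations of $y$ and $z$, and to read off the ramification from each of these separately.

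First I will use the hypothesis $[F:K(u,v)]=q^2$. It makes $F/K(u,v)$ a Galois extension whose group $G\cong(\Fq,+)^2$ consists of the translations $y\mapsto y+a$, $z\mapsto z+b$ with $a,b\in\Fq$. For any place $P\neq P_\infty$ of $K(u,v)$, both $u$ and $v$ are regular at $P$. For every $\lambda\in\Fq$, the element $y+\lambda z$ generates an Artin-Schreier extension $K(u,v,y+\lambda z)/K(u,v)$ of degree $p^m\le q$, defined by $w^q-w=u+\lambda v$ with right-hand side regular at $P$. Such an extension is a tower of degree-$p$ Artin-Schreier extensions, each given by a regular right-hand side, so $P$ is unramified in it. Since $F$ is the compositum of $K(u,v,y)$ and $K(u,v,z)$, and each of these is unramified at $P$, the place $P$ is unramified in $F$. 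Hence $P_\infty$ is the only possible ramified place.

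To prove total ramification at $P_\infty$, I will show that the inertia group $I$ of a place above $P_\infty$ equals $G$. Suppose instead that $I$ is a proper subgroup. Then some nontrivial subgroup $H\le G$ meets $I$ trivially, and its fixed field $E$ is a nontrivial subextension of $F/K(u,v)$ in which $P_\infty$ is unramified. The key step is to exclude this. Since $P_\infty$ is the only candidate for ramification, $E/K(u,v)$ would then be unramified everywhere. But $K(u,v)$ is rational, so it has no nontrivial unramified extensions, by Riemann-Hurwitz: $2g_E-2=[E:K(u,v)](-2)<0$ forces $g_E=0$ and $[E:K(u,v)]=1$. This contradicts $E\neq K(u,v)$, so $I=G$ and $P_\infty$ is totally ramified.

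For the $p$-rank I will use the Deuring-Shafarevich formula for the $p$-group $G$ acting on $F$ with quotient $K(u,v)$ of $p$-rank $0$:
\[
\gamma_F-1=q^2(0-1)+\sum_{Q}(e_Q-1).
\]
Only the unique place $Q$ above $P_\infty$ ramifies, and it has $e_Q=q^2$. The right-hand side is therefore $-q^2+q^2-1=-1$, which gives $\gamma_F=0$.

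The main obstacle is the second step, upgrading ``$P_\infty$ is the only ramified place'' to ``$P_\infty$ is totally ramified''. The argument through unramified subextensions and the simple connectivity of the projective line handles it cleanly, so I expect no pole-order computation to be needed.
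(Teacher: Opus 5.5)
Your first and third steps are fine, but the key second step fails as written. Take $H\le G$ with $H\cap I=\{1\}$ and $E=F^H$. The inertia group of $Q$ in $F/E$ is $I\cap H=\{1\}$, so what is unramified is the \emph{top} extension $F/E$, not $E/K(u,v)$. By multiplicativity of ramification indices, $e(Q\cap E\mid P_\infty)=|I|/|I\cap H|=|I|$. So $P_\infty$ stays ramified in $E$ whenever $I\neq\{1\}$, which is exactly the case of a nontrivial proper inertia group you need to exclude. Your Riemann--Hurwitz contradiction over the rational field $K(u,v)$ is therefore applied to an extension that is not unramified. You also cannot switch to $F/E$, since $E$ need not be rational.

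The repair is to take the fixed field of a subgroup \emph{containing} $I$, namely the inertia field $E:=F^I$. Since $G$ is abelian, all places above $P_\infty$ have the same inertia group $I$, so $P_\infty$ is unramified in $E/K(u,v)$. By your first step $E/K(u,v)$ is then unramified everywhere. Since $[E:K(u,v)]=[G:I]>1$, this contradicts Riemann--Hurwitz. With this change your proof is complete.

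For comparison, the paper skips the inertia-group argument entirely. It applies Deuring--Shafarevich with an unknown number $\ell$ of places above $P_\infty$, each with ramification index $q^2/\ell$ because $K$ is algebraically closed. This gives $\gamma(F)-1=-q^2+(q^2-\ell)=-\ell$, and $\gamma(F)\ge 0$ forces $\ell=1$. That yields total ramification and $\gamma(F)=0$ in one stroke.

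Your corrected route separates the two conclusions. It relies on $\mathbb{P}^1$ having no nontrivial unramified covers, which does not need $G$ to be a $p$-group, but it takes an extra step. The paper's version is the $p$-group case of the same fact, already contained in the formula of your third step.
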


\begin{proof}
    By Artin-Schreier theory (see e.g. \cite{GS_p_extensions_1991}), ramification in $F/K(u,v)$ can occur only above the poles of $u$ and $v$, so only above $P_\infty$.
    Recall that $\Gal(F/K(u,v))$ acts transitively on the places of $F$ lying above $P_\infty$, and let $\ell\geq1$ be the number of places of $F$ lying above $P_\infty$. Then the Deuring-Shafarevich formula implies 
    $$
        \gamma(F) - 1 =q^2\left(\gamma(K(u,v))-1\right) + (q^2 - \ell) = - \ell.
    $$
    As $\gamma(F)\geq0$, this implies $\ell=1$ and $\gamma(F) = 0$.
\end{proof}

We are mostly interested in ordinary function fields, so we should pick $f(u,v)$ in such a way that $u$ and $v$ together have more than one pole. However, in the end of Section \ref{sec:conics}, we will briefly mention two remarkable examples with $p$-rank zero. To determine the $p$-rank and genus in other cases, it will be useful to have a single defining equation for $F$ over $K(u,v)$. In the following lemma we find such an equation when $[F:K(u,v)]=q^2$.

\begin{lemma}\label{lem_one_eq_irr}
    Let $\eta \in \Fqq \setminus \Fq$ and $t = y + \eta z\in F$. Then $F = K(u,v,t)$ and
    \begin{equation}\label{eq:single}
        t^{q^2} - t = u^q - u + \eta (v^q - v).
    \end{equation}
    Also, $[F:K(u,v)]=q^2$ if and only if the polynomial
    $$
        T^p - T - \mu \left( u^q - u + \eta(v^q - v) \right) \in K(u,v)[T],
    $$
    is irreducible for every $\mu \in \Fqq$. 
\end{lemma}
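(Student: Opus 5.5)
We first check equation \eqref{eq:single} and show $F=K(u,v,t)$. Since $\eta\in\Fqq$, we have $\eta^{q^2}=\eta$. Put $w=\eta^q$, and note $w\neq\eta$ because $\eta\notin\Fq$. Then
\[
t^q = y^q + w z^q = y+u + w(z+v),
\]
so $t^q - t = u + wv + (w-\eta) z$. Applying the $q$-th power once more gives $t^{q^2}-t^q = u^q + \eta v^q + (\eta - w) z^q$. Adding the two expressions and using $z^q - z = v$, the $z$-terms cancel and we obtain
\[
t^{q^2}-t = u^q - u + \eta(v^q - v) + \eta v - wv + (\eta-w)v\cdot 0 .
\]
This needs care: $(w-\eta)z + (\eta-w)z^q = (\eta-w)v$. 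Collecting terms then gives exactly $u^q-u+\eta(v^q-v)$, and I will verify this bookkeeping carefully. The same identity yields $z = (t^q - t - u - wv)/(w-\eta)$, so $z\in K(u,v,t)$, and then $y=t-\eta z\in K(u,v,t)$. Hence $F=K(u,v,t)$.

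For the irreducibility criterion, set $R:=u^q-u+\eta(v^q-v)$, and let $L$ be the splitting field of $T^{q^2}-T-R$ over $E:=K(u,v)$. The roots of this polynomial are $t+\alpha$ with $\alpha\in\Fqq$, so $L=E(t)=F$. Thus $F/E$ is Galois, and its Galois group embeds into $(\Fqq,+)$ via $\sigma\mapsto \sigma(t)-t$. Consequently $[F:E]=q^2$ if and only if this embedding is onto, i.e.\ if and only if $T^{q^2}-T-R$ is irreducible over $E$.

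Now apply the standard criterion for additive polynomials (as in \cite{GS_p_extensions_1991}). Let $H\subsetneq \Fqq$ be the image of the Galois group and suppose it is proper. Then $H$ lies in some index-$p$ subgroup. Each such subgroup is the kernel of a map $\alpha\mapsto \mathrm{Tr}_{\Fqq/\F_p}(\mu\alpha)$ for some $\mu\in\Fqq^*$, since the trace form is nondegenerate. Consider the element
\[
s:=\sum_{i}(\mu t)^{p^i},
\]
where the sum makes $s$ the $\F_p$-trace expression. It satisfies $s^p-s = \mathrm{Tr}$-type expression in $\mu R$. The version that matches the statement uses the identity $\mathrm{Tr}(\mu t)^p-\mathrm{Tr}(\mu t) = (\mu t)^{q^2}-\mu t$ with suitable exponents, and I will have to adjust the choice of $\mu$ to get exactly $\mu R$ on the right, possibly replacing $\mu$ by a power-conjugate or using the dual basis. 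Then $\sigma(s)-s=\mathrm{Tr}(\mu(\sigma t - t))$, which vanishes for all $\sigma$ exactly when $H\subseteq\ker$. In that case $s\in E$, so $T^p-T-\mu R$ has a root in $E$ and is reducible, because an Artin–Schreier polynomial is either irreducible or splits completely.

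Conversely, suppose $T^p-T-\mu R$ has a root in $E$ and $[F:E]=q^2$. Then the element $s$ built above is a root in $F$. Since all roots of $T^p-T-\mu R$ differ by $\F_p$, $s$ lies in $E$ up to adding a constant. Hence $\sigma(s)=s$ for all $\sigma$, and the trace form vanishes on all of $\Fqq$, which is impossible. This gives the equivalence. The main obstacle is the precise normalization relating the additive polynomial $T^{q^2}-T$ to $T^p-T$ via $\mu$: writing $T^{q^2}-T=\wp\circ\Lambda$ with $\wp(T)=T^p-T$ and $\Lambda$ additive, and then matching $\mu R$ on the right-hand side. I would handle this by the classical lemma that $\mathbb{F}_p$-linear functionals on $\Fqq$ correspond to $\mu\in\Fqq$, with the twist absorbed by the substitution $\mu\mapsto \mu^{p^j}$, which is permitted since the statement ranges over all $\mu\in\Fqq$.
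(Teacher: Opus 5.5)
Your proof that $F=K(u,v,t)$ is correct and is essentially the paper's argument: both solve $t^q-t=u+\eta^q v+(\eta^q-\eta)z$ for $z$, using $\eta^q\neq\eta$. The problem is the bookkeeping you postponed, and it cannot be done. Adding your two correct identities gives
\[
t^{q^2}-t=u^q+u+\eta^q v+\eta v^q+(\eta-\eta^q)(z^q-z)=u^q+u+\eta(v^q+v),
\]
because $\eta^q v+(\eta-\eta^q)v=+\eta v$. More directly, $y^{q^2}-y=(y^q-y)^q+(y^q-y)=u^q+u$. So the displayed identity holds only when $p=2$, and your sentence ``collecting terms gives exactly $u^q-u+\eta(v^q-v)$'' is false for odd $p$. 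The paper's own proof just calls the identity ``easy to check'' and makes the same sign slip. The defect is therefore in the statement, but a proof has to detect it rather than assert the false equality.

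This also breaks your second part as written, since you set $R=u^q-u+\eta(v^q-v)$ and use that the roots of $T^{q^2}-T-R$ are the $t+\alpha$. With that $R$ the criterion genuinely fails for odd $p$. Take $q=p=3$, $\theta\in\F_9$ with $\theta^2=-1$, and $K(u,v)=K(x)$ with $u=\theta^{-1}(x^3-x)$ and $v=x$. For $a,b\in\F_3$ one has $au+bv-(b-a\theta)x=(a\theta x)^3-a\theta x$, so $[F:K(x)]=9$. Yet for $\mu=\eta^{-1}$ the polynomial $T^3-T-\mu(u^3-u+\eta(v^3-v))$ has the root $\eta^{-3}u+(1+\epsilon)x\in K(x)$, where $\epsilon=\pm1$ is defined by $\eta^{-3}-\eta^{-1}=\epsilon\theta$.

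The repair is to take $R:=t^{q^2}-t=u^q+u+\eta(v^q+v)$ and $\mu\in\Fqq^*$; the case $\mu=0$ must be excluded anyway, since $T^p-T$ is reducible. With that change, and in characteristic $2$ even for the statement as written, your Galois/trace argument is a correct, self-contained proof of what the paper obtains by citing Lemma 1.3 of Garcia--Stichtenoth. The normalization problem you anticipate does not arise. With $q=p^n$ and $s=\sum_{i=0}^{2n-1}(\mu t)^{p^i}$ you get $s^p-s=\mu^{q^2}t^{q^2}-\mu t=\mu R$ because $\mu^{q^2}=\mu$, and $\sigma(s)-s=\mathrm{Tr}_{\Fqq/\F_p}(\mu(\sigma(t)-t))$. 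So no twist $\mu\mapsto\mu^{p^j}$ is needed.
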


\begin{proof}
    Equation \eqref{eq:single} is easy to check. Now, it is clear that $K(u,v,t) \subseteq F$. In order to show $F\subseteq K(u,v,t)$, note first that 
    $t^q - t = y^q - y + \eta^q z^q - \eta z = u +  \eta^q z^q - \eta z$, 
    so $\eta^q z^q - \eta z \in K(u,v,t)$. Together with $z^q - z = v \in K(u,v,t)$ and $\eta^q\ne\eta$, this implies $z \in K(u,v,t)$. Similarly, $(\eta^{-1}t)^q - \eta^{-1}t = (\eta^{-1})^q y^q - \eta^{-1}y + v \in K(u,v,t)$
    and $y^q - y = u \in K(u,v,t)$ imply $y \in K(u,v,t)$. The rest of the claim is \cite{GS_p_extensions_1991}*{Lemma 1.3}. 
\end{proof}

The method described in \cite{GS_p_extensions_1991} can now be used to determine the genus and $p$-rank of $F$. There are $m = (q^2-1)/(p-1)$ distinct subfields of $F$ of degree $p$ over $K(u,v)$. Namely, if $\{\mu_1, \dots, \mu_m\} \subseteq \Fqq^\times$ is a set of representatives for the cosets of $\Fqq^*$ modulo $\F_p^*$, then such intermediate fields are of the form $F_{\mu_i} := K(u,v,t_{\mu_i})$ with
$$
    t_{\mu_i}^p - t_{\mu_i} = \mu_i \left( u^q - u + \eta(v^q - v) \right)
$$
for $i = 1, \dots, m$. By \cite{GS_p_extensions_1991}*{Theorem 2.1}, the genus of $F$ is given by 
\begin{equation}\label{eq_genus_sum}
    g(F) = \sum_{i=1}^m g(F_{\mu_i}).
\end{equation}
The idempotent relation given in the proof of \cite{GS_p_extensions_1991}*{Theorem 2.1} also implies that
\begin{equation}\label{eq_p-rank_sum}
    \gamma(F) = \sum_{i=1}^m \gamma(\mu_i),
\end{equation}
by the Kani-Rosen theorem (see \cite{Kani_Rosen_Jacobians}).

The genus and $p$-rank of each $F_{\mu_i}$ can be determined using \cite{Sti}*{Proposition 3.7.8} and the Deuring-Shafarevich formula (see \cite{HKT}*{Theorem 11.62}). The first step in this direction is to define
\begin{align*}
    \tilde{t}_{\mu_i} := t_{\mu_i} &- \left(\mu_i^{1/p}u^{q/p} + \mu_i^{1/p^2}u^{q/p^2} + \dots + \mu_i^{1/q} u\right) \\
            &- \left((\eta \mu_i)^{1/p}v^{q/p} + (\eta\mu_i)^{1/p^2}v^{q/p^2} + \dots + (\eta\mu_i)^{1/q} v\right), 
\end{align*}

where the $p^k$-roots are taken in $K$. Then, $F_{\mu_i} = K(u,v,\tilde{t}_{\mu_i})$ and a direct check shows 
\begin{equation}\label{eq_F_mu_i}
    \tilde{t}_{\mu_i}^p - \tilde{t}_{\mu_i} = \left(\mu_i^q -  \mu_i\right) u + \left( (\mu_i\eta)^q - \mu_i\eta \right) v.
\end{equation}

In the coming sections, we will see examples where both the genus and $p$-rank of each $F_{\mu_i}$ can be determined directly from this expression once the poles of $u$ and $v$ are known.

\section{Double Artin-Schreier extensions over conics}\label{sec:conics}

In this section, we consider the case where $f(u,v)$ is irreducible and of total degree $2$, i.e., where the affine equation $f(u,v) = 0$ gives rise to an irreducible conic. Several well-known and interesting function fields arise in this way, in particular we recover two examples of ordinary function fields with many automorphisms. If the conic has just one point at infinity then we already know from Lemma \ref{lem_p_rank_0_one_pole} that the $p$-rank of $F$ is zero. This case is considered in Example \ref{ex_1_rational} and \ref{ex_1_non_rational}. If instead there are two points at infinity then it turns out that $F$ is ordinary.


\begin{lemma}\label{lem_two_points_ordinary}
    Suppose $f(u,v) = 0$ gives rise to an irreducible conic with two points at infinity. Then, the genus and $p$-rank of $F$ are equal, i.e., $F$ is ordinary. 
\end{lemma}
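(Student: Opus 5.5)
We reduce to the degree-$p$ subfields. By \eqref{eq_genus_sum} and \eqref{eq_p-rank_sum} we have $g(F)=\sum_i g(F_{\mu_i})$ and $\gamma(F)=\sum_i \gamma(F_{\mu_i})$. Since $\gamma\le g$ always holds, it suffices to prove $g(F_{\mu_i})=\gamma(F_{\mu_i})$ for every $i$. By \eqref{eq_F_mu_i}, each $F_{\mu_i}$ is the Artin--Schreier extension $\tilde t^p-\tilde t = w_i$ of the rational field $K(u,v)$, where
$$w_i=(\mu_i^q-\mu_i)u+((\mu_i\eta)^q-\mu_i\eta)v$$
is a $K$-linear combination $au+bv$. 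Moreover $(a,b)\neq(0,0)$: if $a=0$ then $\mu_i\in\Fq$, and then $b=\mu_i(\eta^q-\eta)\ne 0$.

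Next we locate the poles of $w_i$. The conic is irreducible and has two distinct points at infinity $P_1,P_2$. Since the conic has degree $2$, the coordinate functions $u,v$ have poles only at $P_1,P_2$, and each pole there is simple. To see this, note that a line $\alpha u+\beta v+\gamma=0$ meets the conic in $2$ points counted with multiplicity, so $[K(u,v):K(au+bv)]\le 2$. Hence every nonconstant $w=au+bv$ has pole divisor of degree at most $2$, supported on $\{P_1,P_2\}$.

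We distinguish cases according to the direction $(a,b)$.
\begin{itemize}
\item If the line $au+bv=0$ at infinity is not one of the two asymptotic directions, then $w$ has simple poles at both $P_1$ and $P_2$.
\item If $(a:b)$ is an asymptotic direction, then $w$ is either constant or has a single simple pole.
\end{itemize}
In either case, every pole of $w_i$ is simple, so its order is coprime to $p$. Proposition 3.7.8 of \cite{Sti} then gives
$$g(F_{\mu_i})=\tfrac{p-1}{2}\bigl(-2+\textstyle\sum_P (m_P+1)\bigr),$$
summing over the poles $P$ of $w_i$ with $m_P=1$. The Deuring--Shafarevich formula gives
$$\gamma(F_{\mu_i})-1=p(0-1)+(p-1)r,$$
where $r$ is the number of ramified places, i.e.\ the number of poles of $w_i$. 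With $r$ simple poles, both formulas yield $(p-1)(r-1)$, so $g=\gamma$ for each subfield.

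The main obstacle is the degenerate case where $w_i$ is constant. Then $\tilde t^p-\tilde t=c$ would make $F_{\mu_i}$ not a proper field extension, contradicting $[F:K(u,v)]=q^2$, which is assumed throughout via Lemma \ref{lem_one_eq_irr}. So this case is excluded by hypothesis. We also need to check carefully that the two points at infinity are non-singular branches, so that the pole multiplicities are really $1$. This holds because an irreducible conic is smooth.
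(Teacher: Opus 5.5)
Your proposal is correct and follows essentially the same route as the paper: you reduce to the degree-$p$ subfields $F_{\mu_i}$ via \eqref{eq_genus_sum} and \eqref{eq_p-rank_sum}, observe that the right-hand side of \eqref{eq_F_mu_i} has only simple poles (at $P_1,P_2$), and compare Stichtenoth's Proposition 3.7.8 with the Deuring--Shafarevich formula. The only difference is cosmetic: the paper phrases the last step as ``no higher ramification, so the Hurwitz and Deuring--Shafarevich formulas coincide,'' whereas you compute both explicitly as $(p-1)(r-1)$, and you also dispose of the constant case, which the paper leaves implicit.
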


\begin{proof}
    Let $P_1$ and $P_2$ be the two places of $K(u,v)$ that correspond to the points at infinity. These are the only possible poles of $u$ and $v$ and they are simple poles. Hence, the right hand side of Equation \ref{eq_F_mu_i} has valuation at least $-1$ at all places. By \cite[Proposition 3.7.8]{Sti} and Hilbert's Different Formula this means that there is no higher ramification in $F_{\mu_i}/K(u,v)$, i.e., the second ramification group is always trivial. In particular, the Deuring-Shafarevich formula and the Hurwitz genus formula coincide, so $\gamma(\F_{\mu_i}) = g(F_{\mu_i})$. The result now follows from Equations \ref{eq_genus_sum} and \ref{eq_p-rank_sum}.
\end{proof}

As a first example of this type, we recover the function field of the famous Artin-Mumford curve. 

\begin{example}[Artin-Mumford]\label{ex_am}
    Suppose $f(u,v) = uv - 1$. The conic given by this equation has two distinct $\Fq$-rational points at infinity corresponding to the pole of $u$ and $v$ respectively. By combining the two equations we see that $K(y,z)$ can be defined by the sole equation $(y^q-y)(z^q-z) = 1$, so it is the well known Artin-Mumford function field (see \cite{MZ_gen_Artin-Mumford}). The genus is $(q-1)^2$, it is ordinary, and the automorphism group is the semi-direct product of an elementary abelian group of order $q^2$ and a dihedral group of order $2(q-1)$. The elementary abelian group is given by translations in $y$ and $z$ and the dihedral group can be obtained by lifting the automorphisms of $K(u,v)$ given by $(u,v) \mapsto (au, a^{-1}v)$, for $a \in \Fq^\times$ and the involution $(u,v) \mapsto (v,u)$. The order of magnitude of the automorphism group in terms of the genus is roughly $g^{3/2}$.
\end{example}

If one of the two points at infinity is non-rational then we obtain an ordinary function field of genus $q^2 -q$:

\begin{example}[One rational and one non-rational point at infinity]
    Suppose $f(u,v) = u^2 + \varepsilon uv + 1$, for some $\varepsilon \in K \setminus \Fq$. The conic given by this equation has two points at infinity; one is $\Fq$-rational and the other is non-rational. Denote the corresponding places by $P_1$ and $P_2$, with $P_1$ being the place that corresponds to the $\Fq$-rational point. Then, 
    $$
        v_{P_1}(u) = 1\ \text{ and } \ v_{P_2}(u) = -1,
    $$
    while
    $$
        v_{P_1}(v) = -1\ \text{ and } \ v_{P_2}(v) = -1.
    $$

    Moreover, we have $v_{P_2}(u+\varepsilon v) = 1$ so the right hand side of Equation \ref{eq_F_mu_i} has valuation $-1$ at $P_2$ unless
    $$
        \frac{\mu_i^{q} - \mu_i}{(\mu_i\eta)^{q} - \mu_i\eta} =\frac{1}{\varepsilon},
    $$
    but the left hand side is in $\Fq$, so this does not happen. The valuation at $P_1$ is $-1$ except if $\mu_i\eta \in \Fq$, so we get
    $$
        g(F) = \left(\frac{q^2-1}{p-1} - \frac{q-1}{p-1}\right) \cdot  (p-1) = q^2 - q. 
    $$
    The single defining equation one obtains using Lemma \ref{lem_one_eq_irr} is not particularly nice in this case, and it is not clear to us which automorphisms can be lifted, if any.
\end{example}

Finally, when both points at infinity are non-rational we obtain an ordinary function field of genus $q^2-1$ for which a Singer subgroup of $\aut(K(u,v))$ can be lifted. 

\begin{example}[Singer]\label{ex_singer}
        Suppose $p \neq 2$ and consider the conic given by $f(u,v) = u^2 - \varepsilon  v^2 - 1$, where $\varepsilon$ is a non-square element of $\Fq$. This conic has two non-$\Fq$-rational points at infinity, corresponding to two places, $P_1$ and $P_2$. The valuation of $u$ and $v$ is $-1$ at both $P_1$ and $P_2$, and we may assume that $u+\sqrt \varepsilon v$ and $u - \sqrt\varepsilon v$ have zeros at $P_1$ and $P_2$ respectively, where $\sqrt{\varepsilon}$ is some fixed square root of $\varepsilon$ in $\Fqq$. Since $\sqrt\varepsilon \notin \Fq$, we get that the right hand side of Equation $\ref{eq_F_mu_i}$ has valuation $-1$ at both $P_1$ and $P_2$. This means that 
        $$
            g(F) = \frac{q^2-1}{p-1} \cdot (p-1) = q^2 - 1.
        $$
        To obtain a nice single defining equation for this function field, we define $s := y-\sqrt\varepsilon \cdot z$ and $t := y +\sqrt\varepsilon \cdot z$. It is clear that $F = K(y,z) = K(s,t)$, and a straight forward calculation, using $(\sqrt\varepsilon)^q + \sqrt\varepsilon = 0$, shows that
        $$
            (s^q-t)(t^q - s) = 1. 
        $$

        From this equation, we immediately see that the automorphism group of $F$ contains a cyclic group of order $q+1$ consisting of the automorphisms $\delta_\lambda:(s,t) \mapsto (\lambda s, \lambda^{-1} t)$, for $\lambda \in \Fqq$ with $\lambda^{q+1}=1$. These automorphisms can be obtained by lifting a Singer subgroup from the automorphisms of $K(u,v)$. To see this, define $x := t^q - s$ and note that 
        $$
            u = \frac{1}{2}\left(x + \frac{1}{x}\right) \ \text{ and } \ v =  \frac{-1}{2\sqrt{\varepsilon}}\left(x - \frac{1}{x}\right).
        $$

        This means that $K(u,v) = K(x)$, and we can see directly that the $\gamma_\lambda$'s act as a Singer group on $K(x)$ since 
        $$
            \gamma_\lambda(x) = \lambda x.
        $$

        Together with the involution $\pi:(s,t) \mapsto (t,s)$, which interchanges $x$ and $1/x$, this gives a dihedral group of order $2(q+1)$. We will see in Section \ref{sec_aut_full}, for $q > 7$, that the full automorphism group of $F$ is the semidirect product of the usual elementary abelian group of order $q^2$, i.e., the $\Fq$-translations in $y$ and $z$, and this dihedral group. For now, we just note that the automorphism group is always of order at least $2q^2(q+1)$ which is approximately $2g(F)^{3/2}$.
\end{example}

\begin{remark}\label{remark_even_singer}
    For $p=2$, one can obtain something similar by considering $f(u,v) = u^2 + \varepsilon v^2 + uv + 1$, where $\varepsilon \in \Fq$ is chosen such that $\mathrm{Tr}_{\Fq/\F_2}(\varepsilon) = 1$. The corresponding conic has two non-rational points at infinity, $F$ is ordinary of genus $q^2-1$ and we can still lift a Singer subgroup. In fact, one can repeat the above argument with $\sqrt{\varepsilon}$ replaced by a fixed root, $\xi$, of the polynomial $X^2 + X + \varepsilon \in \F_q[X]$. Defining $s := y + \xi \cdot z$ and $t := y + (\xi + 1) \cdot z$, we get $F = K(s,t)$ and it can be checked directly that $(s^q + t)(t^q + s) = 1$.
\end{remark}

\begin{remark}
    One can check, for example by using the equation in Lemma \ref{lem_one_eq_irr}, that the function field studied above corresponds to the curve which appears in the remark after Theorem 3 in Nakajima's paper (\cite{nakajima_p-ranks_1987}).
\end{remark}

The large automorphism groups in Example \ref{ex_am} and Example \ref{ex_singer} are of the form $E \rtimes H$, where $E$ is an elementary abelian subgroup of order $q^2$ and $H$ is isomorphic to a subgroup of $\PGL(2,q)$. The automorphisms in $E$ are the $\Fq$-translations in $y$ and $z$, and $H$ consists of automorphisms ``lifted'' from $\aut_{\Fq}(K(u,v))$. Since $E$ is the Galois group of the extension $F/K(u,v)$, the group $H$ is isomorphic to a subgroup of $\aut(K(u,v))$, and it acts on the ramified places of $K(u,v)$. 

When $f(u,v)=0$ is an irreducible conic, this means that the action of $H$ on the places of $K(u,v)$ has at least one orbit of order at most two. By \cite[Theorem 11.91]{HKT}, this limits the options for the type of group $H$ can be. In particular, $H$ cannot be isomorphic to $\PGL(2,q)$. In the next section, we consider examples where the set of poles of $u$ and $v$ is of size $q+1$ instead, to allow for the possibility of $H \simeq PGL(2,q)$. \newline

However, we first complete the discussion of double Artin-Schreier extensions over concics by dealing with the cases where the conic has just one point at infinity.

\begin{example}[One $\Fq$-rational point at infinity]\label{ex_1_rational}
    Suppose $f(u,v) = u^2 - \varepsilon v$, for some $\varepsilon \in K^\times$. The conic given by this equation has just one point at infinity, it is $\Fq$-rational, and it corresponds to the unique pole of $u$ and $v$ in $K(u,v)$. We denote this pole by $P_\infty$ and note that
    $$
        v_{P_\infty} (u) = -1 \ \text{ and } \ v_{P_\infty}(v) = -2.
    $$
    We know from Lemma \ref{lem_p_rank_0_one_pole} that the $p$-rank of $F$ is zero. For $p=2$ it follows from Equation \ref{eq_F_mu_i} and the preceding discussion that $g(F) = 0$. For $p$ odd we get instead
    $$
        g(F_{\mu_i}) = 
        \begin{cases}
            \hfil 0  &\text{if }  \mu_i\eta \in \Fq, \text{ and} \\
            \frac{p-1}{2} &\text{else.} 
        \end{cases}
    $$
    Exactly $\frac{q-1}{p-1}$ of the $\mu_i$ satisfy $\mu_i \eta \in \F_q$, so 
    $$
        g(F) = \sum_{i= 1}^m g(F_{\mu_i}) = \left(\frac{q^2-1}{p-1}-\frac{q-1}{p-1}\right) \frac{p-1}{2} = \frac{q^2-q}{2}.
    $$

    If $\varepsilon \in \Fq$, and $p$ is still assumed to be odd, then $\tilde{z} := \frac{1}{2}(y^2 - \varepsilon z)$ satisfies
    $$
        \tilde{z}^q - \tilde{z} = y(y^q - y), 
    $$
    and $F = K(y,\tilde{z})$. This is a well-known function field which has several special properties; it has, for example, an automorphism group of order $q^3(q-1)$ and for $p\equiv 3 \pmod 4$ it is $\F_{q^{2p}}$-maximal.
    There are automorphisms of $K(u,v)= K(u)$ that can be lifted, namely those given by $u \mapsto au + b$, for $a \in \Fq^\times$ and $b \in \Fq$. In fact, it is known that the automorphisms of $F$ are of the form
    $$
    \varphi_{a,b,c,d}:
    \begin{cases}
        y \mapsto ay + c, \\
        z \mapsto a^2z + 2a\varepsilon y + d,
    \end{cases}
    $$
    where $a$ and $b$ are as above, and $c$ and $d$ satisfy $c^q-c = b$ and $d^q - d = \varepsilon a^2$.

    If $\varepsilon \not\in \Fq$ then it is less clear which automorphisms we get besides from the $\Fq$- translations in $y$ and $z$, and the automorphisms obtained from lifting $u \mapsto au$. We mention just one more special case, namely the case where $\varepsilon^q + \varepsilon = 0$. For such $\varepsilon$, define $\tilde{z} := \varepsilon z + (\varepsilon y)^2$ and $\tilde{y} = \xi y$, for some $\xi \in K$ such that $\xi^{q+1} = -\varepsilon$. A direct check then shows that $F = K(\tilde{y}, \tilde{z})$ with $\tilde{z}^q + \tilde{z} = \tilde{y}^{q+1}$, i.e., $F$ is the Hermitian function field. In particular the automorphism group is isomorphic to $\mathrm{PGU}(3,q)$, which has order $q^3(q^2-1)(q^3+1)$.
\end{example}

\begin{remark}
    The case $p = 2$ can be treated in a slightly more general setting: Suppose just that $u$ and $v$ have the same unique pole in $K(u,v)$ and that $f(u,v)$ is an irreducible polynomial of total degree $2$. Then, the valuation of the right hand side of Equation \ref{eq_F_mu_i} is at least $-2$, so it follows from \cite[Proposition 3.7.8]{Sti} and the discussion after Lemma \ref{lem_one_eq_irr} that $g(F) = 0$. 
\end{remark}

When $p$ is odd, the situation is a bit more complicated. In the above example we had $g(F) = \frac{q^2-q}{2}$, but the next example shows that this is not the only possible genus.

\begin{example}[One non-rational point at infinity]\label{ex_1_non_rational}
    Suppose $p\neq 2$ and $f(u,v) = (\varepsilon u - v)^2 - v$ for some $\varepsilon \in K \setminus \Fq$. The conic given by this equation has one non-$\Fq$-rational point at infinity. Let $P_\infty$ be corresponding place in $K(u,v)$, i.e., the unique pole of $u$ and $v$. Note that 
    $$
        v_{P_\infty} (u) = -2 \ \text{ and } \ v_{P_\infty}(v) = -2,
    $$
    while $v_{P_\infty}(\varepsilon u - v) = -1$. The $p$-rank of $F$ is again zero by Lemma \ref{lem_p_rank_0_one_pole}, and we can determine the genus as in the previous example. The right hand side of Equation \ref{eq_F_mu_i} has valuation $-2$ unless 
    $$
        \frac{\mu_i^{q} - \mu_i}{(\mu_i\eta)^{q} - \mu_i\eta} = -\varepsilon,
    $$
    but a direct check shows that the left hand side is in $\Fq$, so this never happens. We get that 
    $$
        g(F_{\mu_i}) = \frac{p-1}{2},
    $$
    and hence 
    $$
        g(F) = \frac{q^2-1}{p-1} \cdot \frac{p-1}{2} = \frac{q^2-1}{2}.
    $$

    Initial calculations with Magma suggest that the number of $\F_{q^{2q}}$-rational places is $q^{2q} + 1$ and that the function field is $\F_{q^{4q}}$-minimal, but we leave this for future work. As a final remark, we note that writing $t = y - z/\varepsilon^q$ and $s =  u - v/\varepsilon$ gives $F = K(s,t)$ with
    $$
        t^{q^2} - t = s^q - s + \alpha s^2, 
    $$
    where $\alpha = (\varepsilon^{q-1}-1)/\varepsilon^{q-2}$.
\end{example}

\begin{remark}
    The examples in this section cover all the possible genera one can obtain when starting from an irreducible conic. However, the isomorphism classes among the function fields with the same genus have yet to be described.
\end{remark}

\section{Three ordinary function fields with many automorphisms}\label{sec:three_Zieve}
In this section, we study three ordinary function fields with large automorphism groups, related to a curve first described by Zieve. The first two can be seen as double Artin-Schreier extensions of a rational function field while the last one is a triple Artin-Schreier extension.

\subsection{The Zieve function field}\label{sec_Zieve_curve}

For a prime power $q = p^n$, consider the function field $Z_q := K(x,y,z)$ defined by the equations

\begin{align*}
        y^q - y &= \frac{1}{x^q - x}, \\
        z^q - z &= \frac{x^{q+1}}{x^q - x}.
\end{align*}

This is a double Artin-Schreier extension of the projective line, and we could in principle use the methods of the previous sections to determine the genus and the $p$-rank, but in this case a more direct approach is easier (see Remark \ref{rmk_Zieve_plane_model}). 

In the extension $K(x,y)/K(x)$ there are exactly $q$ ramified places, namely the zeros of $(x-\lambda)$ for $\lambda \in \Fq$, and all of these are totally ramified. Hence a direct calculation, using the Hurwitz formula and the Deuring-Shafarevich formula, shows that 
$$
    g(K(x,y)) = \gamma(K(x,y)) = (q-1)^2, 
$$

where $\gamma(K(x,y))$ is the $p$-rank. \newline

In the extension $K(x,y,z)/K(x,y)$ the $q$ poles of $x$ are all totally ramified. We claim that these are the only ramified places. The other poles of $\frac{x^{q+1}}{x^q - x}$, and hence the only other potentially ramified places, are the zeros of $x^{q-1} - 1$. For a fixed $\lambda\in \Fq^\times$, denote by $P_\lambda$ the zero of $(x-\lambda)$ in $K(x,y)$. Since
$$
    \frac{x^{q+1}}{x^q - 1} = x^{q+1} (y^q - y) = \lambda^{q+1} (y^q - y) + (x^{q+1} - \lambda^{q+1})(y^q - y),   
$$
and 
$$
    (x^{q+1} - \lambda^{q+1})(y^q - y) = \frac{x^{q+1} - \lambda^{q+1}}{x^q - x} 
        = \frac{x^{q+1} - \lambda^{q+1}}{(x-\lambda)\prod_{\varepsilon\in \Fq\setminus \{ \lambda \}} (x-\varepsilon)}, 
$$
which has valuation $0$ at $P_\lambda$ (the numerator, when viewed as a polynomial, is separable and has $\gamma$ as a root), we get
$$
    v_{P_\lambda}\left(  \frac{x^{q+1}}{x^q - 1} - ((y\lambda^2)^q -y\lambda^2) \right) = 0.
$$
This shows that $P_\lambda$ splits in $\Fq(x,y,z)/\Fq(x,y)$, as claimed, and a direct calculation yields
$$
    g(K(x,y,z)) = \gamma(K(x,y,z)) = q^3 - q^2 - q + 1,
$$
i.e., the function field $Z_q$ is ordinary.

\begin{remark}\label{rmk_Zieve_plane_model}
    As mentioned previously, the genus and $p$-rank could also have been determined with the methods described in Section \ref{sec_preliminary}, but even the first step of finding the relation between $u := 1/(x^q-x)$ and $v := x^{q+1}/(x^q-x)$ is non-trivial. For $p = 2$ we have
    $$
        u^q v + uv^q + \mathrm{Tr}_{\Fq/\F_2}(uv) + 1 = 0,
    $$
    and irreducibility follows from $[K(x):K(u)] = q$ and $K(x) = K(u,v)$, which in turn follows from using the identities $x = x^q + 1/u$ and $x^2 = (x+v)/u$. The corresponding curve has $q+1$ non-singular points at infinity, so we could proceed without too much trouble. However, for $q$ odd we are less lucky. One can check that
    \begin{align*}
        u^qv + uv^q =
        \begin{cases}
            \hfil (2uv + 1) \cdot \prod_{i=1}^{(q-1)/4} \left( 1  + 4 uv - (\alpha_i + \alpha_i^q)^2 (uv)^2\right) \ &\text{ for } q \equiv 1 \mod 4, \text{ and } \\
            & \\
            \hfill 2 \cdot \prod_{i=1}^{(q+1)/4} \left( 1  + 4 uv - (\alpha_i + \alpha_i^q)^2 (uv)^2\right) \ &\text{ for } q \equiv 3 \mod 4,
        \end{cases}
    \end{align*}
    where the $\alpha_i$'s are elements of $\Fqq$ chosen in such a way that $\cup_{i} \left\{ \alpha_i, - \alpha_i, 1/\alpha_i, -1/\alpha_i\right\}$ is the set of all roots of $T^{q+1} + 1 \in \Fqq[T]$, except those that are also roots of $T^2 + 1$. 
    
    Again, irreducibility follows from $K(x) = K(u,v)$, which holds since $ux^2 + x - v = 0$ and $[K(x):K(u,v)]$ divides $[K(x) : K(u)] = q$, so the extension degree divides $\gcd(2,q) = 1$. Besides from the more complicated looking expression, we also find that the corresponding plane curve has singular points at infinity, so the situation is more complicated than for $q$ even.
\end{remark}

We now turn our attention to the automorphisms of $Z_q$. There is the usual elementary abelian subgroup of automorphisms consisting of the $\Fq$-translations in $y$ and $z$, but we also find automorphisms of the form 
$$
    \delta_\lambda : (x,y,z) \mapsto (\lambda x, \lambda^{-1}y, \lambda z),
$$
for $\lambda \in \Fq^\times$, and
$$
    \pi : (x,y,z) \mapsto (1/x, z, y).
$$
This means that the full automorphism group of $Z_q$ has order at least $2q^2(q-1)$, i.e., its size is at least of the same order of magnitude as $g(Z_q)$.

For $q = 2^n$, with $n\in \mN$, it was noticed by Zieve that $Z_q$ has a very large automorphism group compared to its genus. In fact, one finds additional automorphisms of the type
$$
    \gamma_{\mu,\nu} : (x,y,z) \mapsto (x + \mu, y, z + \mu^2 y + \nu)
$$
where $\mu \in \Fq$ and $\nu \in \Fqq$ with $\nu^q + \nu = \mu$. We will see in Section \ref{sec_aut_full}, except possibly for small values of $n$, that the full automorphism group of $Z_{2^n}$ is generated by the automorphisms mentioned above, and that it is isomorphic to the semidirect product of an elementary abelian group of order $q^2$ and a group isomorphic to $\PGL(2,q)$. In particular, for $q = 2^n$ and $n > 3$, we will see that the order of $\aut(Z_q)$ is $q^3(q^2-1)$, which is of the same order of magnitude as $g(Z_q)^{5/3}$. \newline

It seems natural to next look for a double Artin-Schreier extension of the projective line where all of $\PGL(2,q)$ can be lifted, also for odd $q$, but we have not been successful in this search. However, we will give an example of a double Artin-Schreier extension where a subgroup of order $q(q-1)$ can be lifted, and finally we will be able to lift all of $\PGL(2,q)$ in a triple Artin-Schreier extension for odd $q$.

\subsection{The modified Zieve function field}
Suppose $q$ is a power of an odd prime, and consider the function field $Z'_q := K(x,y,z)$ defined by the equations
\begin{align*}
    y^q - y &= \frac{1}{x^q - x}, \\
    z^q - z &=  \frac{x^q + x}{x^q-x}.
\end{align*}
We note that $u := 1/(x^q-x)$ and $v := (x^q+x)/(x^q -x)$ satisfies
$$
    u^{q-1}v - v^q + u^{q-1} + 1 = 0,
$$
and the polynomial $f(u,v) := u^{q-1}v - v^q + u^{q-1} + 1$ is irreducible. The corresponding curve has $q$ points at infinity and all of them are non-singular. This means that the poles of $u$ and $v$ are all simple in the rational function field $K(x) = K(u,v)$, so it follows from a similar argument as in the proof of Lemma \ref{lem_two_points_ordinary} that $Z'_q$ is ordinary. The genus can be determined either directly as for the Zieve function field or by means of the method described in Section \ref{sec_preliminary}. Using the latter approach, we find that
$$
    g((Z_q')_{\mu_i}) = 
    \begin{cases}
        (p-1)(q-1) & \text{ if } \mu_i \eta \in \Fq, \text{ and },\\
        (p-1)(q-2) & \text{ else,}
    \end{cases}
$$
so that
\begin{align*}
    g(Z'_q) &= \sum_{i=1}^m g((Z_q')_{\mu_i})\\
        &= \left( \frac{q^2-1}{p-1} - \frac{q-1}{p-1}\right) \cdot (p-1)(q-2) +  \left( \frac{q-1}{p-1}\right) \cdot (p-1)(q-1) \\
        &= (q-1)(q^2 - q -1).
\end{align*}

The automorphisms we see immediately are the usual $\F_q$-translations in $y$ and $z$, as well as those of the form
$$
    \gamma_\mu: (x,y,z) \mapsto (x + \mu , y, z + 2\mu y), 
$$
for $\mu \in \Fq$, and 
$$
    \delta_\lambda: (x,y,z) \mapsto (\lambda x, \lambda^{-1}y,z),
$$
for $\lambda \in \Fq^\times$. 
These automorphisms together generate a group of order $q^3(q-1)$, so $Z_q'$ is ordinary with an automorphism group that is at least of the same order of magnitude as $g(Z_q')^{4/3}$.

\subsection{The extended Zieve function field}\label{sec_spicy_curve}

Let $q = p^n$ for some odd prime $p$ and $n \in \mN$. In this case, the automorphism group of $Z_q$ does not seem to be remarkably large, but there is an extension field $\widetilde{Z}_q \supseteq Z_q$, which is ordinary and has a large automorphism group compared to its genus. In fact, consider $\widetilde{Z}_q := K(x,y,z,t)$ defined by the equations 
\begin{align*}
    y^q - y = \frac{1}{x^q - x}, \\
    z^q - z = \frac{x^{q+1}}{x^q - x},\\
    t^q - t = \frac{x^q + x}{x^q-x}.
\end{align*}

The extension $\widetilde{Z}_q / Z_q$ is unramified, so we cannot get irreducibility directly like for $Z_q$. Instead, irreducibility follows from considering first the extension $K(x,t)/K(x)$, then the extension $K(x,t,y)/K(x,t)$, and finally $K(x,t,y,z)/K(x,t,y)$. All of these are Artin-Schreier extensions with at least one totally ramified place: In the first extension the zero of $x-\lambda$ is totally ramified for $\lambda \in \Fq^\times$, in the second extension the $q$ zeros of $x$ are totally ramified, and in the last extension the $q^2$ poles of $x$ are totally ramified. Straightforward calculations using the Hurwitz formula and the Deuring-Shafarevich formula shows that 
$$
    g(K(x,y,z,t)) = \gamma(K(x,y,z,t)) = q^4 - q^3 - q^2 + 1,
$$
and in particular $\tZ_q$ is ordinary.

The first observation regarding the automorphism group of $\tZ_q$ is that it contains an elementary abelian subgroup of order $q^3$, namely the $\Fq$-translations of $y$, $z$ and $t$. Besides, we find automorphisms of the type:
$$
    \gamma_{\mu} : (x,y,z,t) \mapsto (x + \mu, y, z + \mu^2 y + \mu t, t + 2\mu y),
$$
for $\mu \in \Fq$, 
$$
    \delta_\lambda : (x,y,z,t) \mapsto (\lambda x, \lambda^{-1}y, \lambda z,t),
$$
for $\lambda \in \Fq^\times$, and finally the involution
$$
    \pi : (x,y,z,t) \mapsto (-1/x, z, y,-t).
$$

We will see in Section \ref{sec_aut_full} that these generate an automorphism group isomorphic to the semidirect product of an elementary abelian group of order $q^3$ and a group isomorphic to $\PGL(2,q)$, and that this is the full automorphism group, except possibly for small values of $q$. In particular, the order of the automorphism group is $q^4(q^2-1)$, i.e., at least of the same order of magnitude as $g(\tZ_q)^{3/2}$.

\section{Full automorphism groups of selected curves}\label{sec_aut_full}

In this section, we determine the full automorphism group of several of the function fields considered in the previous sections, namely for the Singer, Zieve, and extended Zieve function fields.

\subsection{Automorphisms of the Singer function field}
Let $q$ be a prime power, fix an algebraic closure $K$ of $\Fq$, and consider the function field $F := K(s,t)$ defined by 
$$
    (s^q-t)(t^q-s) = 1.
$$
In Example \ref{ex_singer}, we saw that this is an ordinary function field of genus $q^2-1$, and that the automorphism group has order at least $2q^2(q+1)$. In the following we will prove that the full automorphism group is no larger than this, for $q > 7$.

Let $x = t^q - s$ and recall that $F$ can be realized as a double Artin-Schreier extension over $K(x)$ with two totally ramified places. As a defining equation one could take either 
$$
    t^{q^2} - t = \frac{x^{q+1}+1}{x}  
$$
or 
$$
    s^{q^2}-s = \frac{x^{q+1}+1}{x^q}.
$$

The two totally ramified places $P_1$ and $P_2$ are, respectively, the zero and the pole of $x$ in $K(x)$. We denote the unique place in $F$ above $P_1$ by $Q_1$, and $Q_2$ is the unique place above $P_2$. From the defining equations we see that 
$$
    (s)_\infty  = q Q_2 + Q_1
$$
and 
$$
    (t)_\infty = qQ_1 + Q_2,
$$
and the divisor of $x$ in $F$ is $q^2(Q_2-Q_1)$. These observations will be useful later.

Recall that 
\begin{align*}
    E &:= \left\{ \sigma_{\alpha,\beta} : (s,t) \mapsto (s + \alpha - \beta \sqrt{\varepsilon}, \ t + \alpha + \beta \sqrt{\varepsilon}) \right\} \simeq E_{q^2} \\
    H_1 &:= \{ \delta_{\sigma}: (s,t) \mapsto (\lambda s, \lambda^{-1} t) \mid \lambda^{q+1} = 1, \lambda \in \Fqq\} \simeq C_{q+1} \\
    H_2 &:= \langle \pi: (s,t) \mapsto (t,s) \rangle 
\end{align*}
are subgroups of $\aut(F)$ for $q$ odd. For $q$ even, the same is true for $H_1$ and $H_2$ as above but the elementary abelian subgroup is instead
$$
    E := \left\{ \sigma_{\alpha,\beta} (s,t) \mapsto (s + \alpha + \beta \xi, \ t + \alpha + \beta (\xi+1))\right\} \simeq E_{q^2},
$$

where $\xi$ is as described in Remark \ref{remark_even_singer}. 

In any case, it is clear that $H := \langle H_1,H_2\rangle$ is a dihedral group of order $2(q+1)$, and a direct check shows that $H$ normalizes $E$. We will show that $\aut(F)$ is, in most cases, no larger than $G := E \rtimes H$.

\begin{theorem}\label{thm_full_aut_singer}
    For $q > 7$ and $F = K(s,t)$ as defined above we have 
    $$
        \aut(F) \simeq E_{q^2} \rtimes D_{2(q+1)}.
    $$
    In particular, the order of the automorphism group is $q^2(q+1)$, so $|\aut(F)| \sim g(F)^{3/2}$.
\end{theorem}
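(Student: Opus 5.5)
The plan is to show that $E$ is the full Sylow $p$-subgroup of $A:=\aut(F)$ and that it is normal in $A$. Then $A/E$ embeds in $\aut(K(x))$, because $F^E=K(x)$, and it is enough to find which automorphisms of $K(x)$ lift. Throughout I use that $F$ is ordinary of genus $g=q^2-1$ (Lemma \ref{lem_two_points_ordinary} and Example \ref{ex_singer}). For an ordinary function field every nontrivial $p$-element has trivial second ramification group at each of its fixed places. Moreover, the stabilizer $A_Q$ of a place $Q$ is $S_Q\rtimes C$, with $S_Q$ its normal Sylow $p$-subgroup and $C$ cyclic of order prime to $p$.

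\emph{Step 1 (Sylow $p$-subgroups).} I will apply Nakajima's bound for $p$-subgroups $S$ of automorphism groups of ordinary function fields. For $p$ odd it gives $|S|\le p(g-1)/(p-2)=p(q^2-2)/(p-2)<pq^2$, so a Sylow $p$-subgroup containing $E$ must equal $E$. For $p=2$ the bound only gives $|S|\le 4(g-1)$, which still allows order $2q^2$. In that case I would argue with the fixed places directly. By Hurwitz applied to $F/E$ the only places fixed by $E$ are $Q_1$ and $Q_2$, each with full stabilizer $E$. A $2$-group $S\supsetneq E$ normalizes $E$ (as $E$ has index $2$), so $S$ permutes $\{Q_1,Q_2\}$. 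Since $S$ is a $2$-group and $E$ fixes both places, Deuring–Shafarevich applied to $F/S$, with the bound $\gamma\geq 0$ on the quotient, should give the contradiction.

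\emph{Step 2 (normality of $E$, the main obstacle).} Suppose $E$ is not normal, and let $O$ be the $A$-orbit of $Q_1$. Then $A$ has more than one Sylow $p$-subgroup, each a conjugate of $E$ fixing exactly two places. Each point stabilizer $A_Q$ with $Q\in O$ has normal Sylow subgroup of order $q^2$, and the nontrivial $p$-elements have only first-order ramification. I would apply Hurwitz/Deuring–Shafarevich to $F/A$, with the quotient genus $0$ or larger, using the classification of groups whose Sylow $p$-subgroups are TI and fix a place, as in \cite{HKT}*{Theorem 11.91 and the surrounding results}. This should force $A$ to act $2$-transitively on $O$ as one of $\PSL(2,q^2)$, $\mathrm{PSU}(3,q^{2/3})$, ${}^2B_2$ or ${}^2G_2$, or else to be small. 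Each case gives an order of at least about $q^6$, which violates Nakajima's bound $|A|\le 84g(g-1)\approx 84q^4$ once $q$ is not too small. The small-case leftovers are where the hypothesis $q>7$ is needed, and I expect this case analysis to be the hardest step. The first thing I would try is to bound $|O|\equiv 2$ or $|O|\equiv 1\pmod{q^2}$ together with the Hurwitz count.

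\emph{Step 3 (the quotient).} With $E\trianglelefteq A$, the group $\bar A=A/E$ acts on $K(x)$ and preserves the set $\{P_1,P_2\}$ of ramified places. So every element of $\bar A$ has the form $x\mapsto\lambda x$ or $x\mapsto \lambda/x$. Write $x\mapsto \lambda x$ for the action of some $\varphi\in A$. Then $\varphi(t)$ generates, over $K(x)$, the Artin–Schreier extension given by $\varphi(t)^{q^2}-\varphi(t)=(\lambda^{q+1}x^{q+1}+1)/(\lambda x)$. This must coincide with $F$, so the right-hand side differs from $c\,(x^{q+1}+1)/x$, for some $c$ in the multiplicative group of $\F_{q^2}$, by a term of the form $w^{q^2}-w$. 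Comparing pole parts at $P_1$ and $P_2$ forces $\lambda^{q+1}=1$. The maps $x\mapsto\lambda/x$ reduce to this case after composing with $\pi$. Hence $|\bar A|\le 2(q+1)$, and $\bar A$ is exactly the image of $H\simeq D_{2(q+1)}$. Therefore $A=E\rtimes H$, of order $2q^2(q+1)\sim 2g^{3/2}$.
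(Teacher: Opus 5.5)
There are two genuine gaps, both before your Step 3.

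\textbf{Step 1 is false for $p=2$.} The involution $\pi\colon(s,t)\mapsto(t,s)$ normalizes $E$ and swaps $Q_1,Q_2$. So for $q$ even, $E\rtimes\langle\pi\rangle$ is already a $2$-subgroup of $\aut(F)$ of order $2q^2$. Hence $E$ is not a Sylow $2$-subgroup of $A$, and the Deuring--Shafarevich contradiction you hope for does not exist. If $S$ swaps $Q_1,Q_2$, the formula becomes $q^2=2q^2\gamma(F^S)+\sum_{Q\neq Q_1,Q_2}(e_Q-1)$. This holds with $\gamma(F^S)=0$ and $e_Q=2$ at the $q^2$ places over $x=1$, which is exactly the situation of $E\langle\pi\rangle$. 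Consequently the framework of your Step 2 (``the Sylow $p$-subgroups are the conjugates of $E$'') also breaks down in even characteristic. What is true, and all that is needed, is that $E$ is the Sylow $p$-subgroup of the \emph{stabilizer} $A_{Q_1}$. The paper obtains this only after knowing that $\{Q_1,Q_2\}$ is an $A$-orbit. Then $A_{Q_1}$ has index $2$, so its normal Sylow $p$-subgroup $E'$ is normal in $A$. For $p=2$, Nakajima's bound applied to the $2$-group $E'\langle\pi\rangle$ gives $2|E'|\le 4(g-1)<4q^2$, so $E'=E$.

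\textbf{Step 2 is not proved, and the tool you suggest does not fit.} The route to $2$-transitivity and to the list $\PSL(2,\cdot)$, $\mathrm{PSU}(3,\cdot)$, ${}^2B_2$, ${}^2G_2$ starts from a Sylow $p$-subgroup that fixes one point of the orbit and acts semiregularly on the others. Here $E$ fixes two points of $O$. The missing idea is an elementary count. The group $EH_1$, of order $q^2(q+1)$, fixes $Q_1$ and $Q_2$ and acts semiregularly on all other places: $E$ because $F/K(x)$ is ramified only at $P_1,P_2$, and $H_1$ because it acts on $K(x)$ as $x\mapsto\lambda x$. So $|O|=2+kq^2(q+1)$. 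If $k\ge 1$, the orbit-stabilizer theorem gives $|A|\ge q^2(q+1)\bigl(2+q^2(q+1)\bigr)>84(q^2-1)(q^2-2)=84g(g-1)$ as soon as $q\ge 8$; this is exactly where $q>7$ enters. Counting with $E$ alone would require $q$ larger than about $80$. Thus $\{Q_1,Q_2\}$ is an orbit and $E=O_p(A_{Q_1})$ is normal in $A$, with no classification needed.

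After that, your Step 3 is a valid alternative to the paper's Riemann--Roch argument with $\mathcal L(qQ_2+Q_1)=K\oplus sK$, once one point is corrected. Since $\varphi$ normalizes $E$, a priori $\varphi(t)$ is an additive polynomial in $t$ plus an element of $K(x)$, not $ct+w$. With $R=x^q+x^{-1}$, the correct relation is therefore $\varphi(R)\equiv cR$ modulo $\{w^p-w: w\in K(x)\}$, not modulo $\{w^{q^2}-w\}$. Comparing the reduced forms $\lambda x+\lambda^{-1}x^{-1}$ and $c^{1/q}x+cx^{-1}$ then gives $\lambda^{q+1}=1$.
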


The key to the proof is to consider certain Riemann-Roch spaces, as in the proof of \cite[Proposition 3.1]{Korchmaros_Montanucci_2019}, and we thank Maria Montanucci for suggesting this strategy. First, we consider the orbit containing $Q_1$ and $Q_2$.

\begin{lemma}\label{lem_orbit_Singer}
    If $q > 7$ then $\Omega := \{Q_1,Q_2\}$ is an orbit under $\aut(F)$.
\end{lemma}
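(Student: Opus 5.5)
Since $\pi$ swaps $Q_1$ and $Q_2$, the set $\Omega$ is contained in a single orbit, so it suffices to show that $\aut(F)$ fixes $\Omega$ setwise. Equivalently, we must show that no automorphism sends $Q_1$ to a place outside $\Omega$. We use that $F$ is ordinary of genus $g=q^2-1$ (Lemma \ref{lem_two_points_ordinary}) and that $G=E\rtimes H$ has order $2q^2(q+1)$. The plan has two parts.

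\emph{Step 1 (Weierstrass semigroups).} We compute the Weierstrass semigroup at $Q_1$, or at least its small elements, from the known divisors: $(x)=q^2(Q_2-Q_1)$, $(s)_\infty=qQ_2+Q_1$, $(t)_\infty=qQ_1+Q_2$. The function $t$ has pole divisor $qQ_1+Q_2$, not a multiple of $Q_1$. So we look for functions regular outside $Q_1$, built from $1/x$ (pole order $q^2$ at $Q_1$) and combinations such as $t/x^j$, $t^q-s$, $s/x$. For instance $s/x$ has divisor of poles supported on $Q_1$, with order $q^2-1$ there. This gives $q^2-1$ and $q^2$ in $H(Q_1)$, and $1/x$ shows the first few nongaps are at least of order $q^2-1$ or so. In fact, via Riemann--Roch spaces $L(mQ_1)$, we aim to show that the smallest nonzero nongap at $Q_1$ is $q^2-1$ or close to it. A cleaner variant, following \cite[Proposition 3.1]{Korchmaros_Montanucci_2019}, is to consider $L(Q_1+Q_2)$, $L(q(Q_1+Q_2))$, and similar spaces, and to show that the divisor $D=Q_1+Q_2$ (or a multiple) has an unusually large dimension that a generic pair of places cannot have.

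\emph{Step 2 (orbit-size argument).} Suppose $\Omega$ is not invariant. Then the orbit $O$ of $Q_1$ strictly contains $\Omega$. Now $G$ acts on places outside $\Omega$ with stabilizers tame and of order dividing $2(q+1)$, since $E$ acts semiregularly off $\Omega$. Its orbits there have length at least $q^2(q+1)$, except possibly a few short orbits coming from fixed points of elements of $H$. Hence $|O|\ge 2+q^2$ roughly, and $|\aut(F)|\ge |O|\cdot|\aut(F)_{Q_1}|\ge (q^2+2)\,q^2(q+1)$, because the stabilizer of $Q_1$ contains $E\rtimes H_1$. This is of order $g^{5/2}$, while Nakajima's bound for ordinary fields gives $|\aut(F)|\le 84g(g-1)\approx 84q^4$. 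For $q>7$, and assuming the count works out, we get a contradiction. Alternatively, we use the refined ordinary bounds \cite{Gunby_Smith_Yuan_2015}.

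The delicate point, and the expected main obstacle, is controlling short $G$-orbits outside $\Omega$ together with the full stabilizer of $Q_1$ in $\aut(F)$. Nakajima's bound also forces the Sylow $p$-subgroup of $\aut(F)_{Q_1}$ to be elementary abelian of bounded size in ordinary cases. We must therefore verify that the lower bound $|O|\cdot|\aut(F)_{Q_1}|$ truly exceeds $84g(g-1)$. If the crude count fails, we fall back on Step 1: an automorphism carrying $Q_1$ into another orbit must preserve the Weierstrass semigroup. So we check that places in the short $G$-orbits, and generic places, have a semigroup different from that of $Q_1$, using that $1,1/x,s/x$ give nongaps $0,q^2-1,q^2$ at $Q_1$, whereas at a generic place the first nongap is $g+1=q^2$ (ordinary places being non-Weierstrass generically). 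The threshold $q>7$ should enter precisely in this numerical comparison.
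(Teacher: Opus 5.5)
\textbf{There is a genuine gap in Step 2.} Your overall plan there is the paper's. Since $\pi$ puts $Q_1,Q_2$ in one orbit $O$, and the stabilizer of $Q_1$ contains $E\rtimes H_1$ of order $q^2(q+1)$, you play orbit--stabilizer against Nakajima's bound $|\aut(F)|\le 84g(g-1)=84(q^2-1)(q^2-2)$. The problem is the orbit estimate you actually use, $|O|\ge q^2+2$: it is too weak for the stated range. It gives $|\aut(F)|\ge (q^2+2)q^2(q+1)\approx q^5$ against roughly $84q^4$, which is a contradiction only for $q\ge 83$. For example, at $q=8$ you get $38016$ against $328104$, and at $q=81$ the inequality still goes the wrong way. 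So the place you mark with ``assuming the count works out'' is exactly where the argument fails for $8\le q\le 81$. You even write that orbits off $\Omega$ have length at least $q^2(q+1)$, but you leave it unjustified and fall back to $q^2$.

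The missing observation is to count with the subgroup $E\rtimes H_1$ of the stabilizer rather than with $G$, because $E\rtimes H_1$ acts semiregularly on places outside $\Omega$. First, $E=\Gal(F/K(x))$ and only $P_1,P_2$ ramify, so $E$ acts freely off $\Omega$. Second, $H_1$ induces $x\mapsto\lambda x$ on $K(x)$, which fixes only the zero and the pole of $x$. Hence an element of $E\rtimes H_1$ fixing a place outside $\Omega$ has trivial image in $H_1$, so it lies in $E$ and is trivial. Since $O\setminus\Omega$ is $E\rtimes H_1$-invariant, $|O|=2+kq^2(q+1)$. If $k\ge 1$, then $|\aut(F)|\ge q^2(q+1)\bigl(2+q^2(q+1)\bigr)$, which exceeds $84(q^2-1)(q^2-2)$ from $q=8$ on ($332928>328104$, while at $q=7$ one has $154448<189504$). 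This is where the hypothesis $q>7$ comes from. The short $G$-orbits that worried you come from the reflections $x\mapsto\lambda/x$ in $H$; these do not fix $Q_1$ and play no role.

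\textbf{The Step 1 fallback cannot repair the small cases, because it rests on a miscalculation.} $Q_1$ and $Q_2$ are \emph{not} Weierstrass points. As noted in the paper (via Garcia's result on Artin--Schreier extensions with exactly two totally ramified places), their gap sequence is $1,2,\dots,q^2-1=g$. So the first nonzero nongap at $Q_1$ is $g+1=q^2$, exactly as at a generic place. With the divisors you quote, $(x)=q^2(Q_2-Q_1)$ and $(s)_\infty=qQ_2+Q_1$, the function $1/x$ has its pole at $Q_2$, not at $Q_1$. Likewise $s/x$ vanishes to order $q^2-1$ at $Q_1$ and has its only pole at $Q_2$, of order $q^2+q$. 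Neither function produces a nongap $q^2-1$ at $Q_1$. Weierstrass semigroups therefore cannot separate $Q_1$ from other places. Your remark about $\mathcal{L}(q(Q_1+Q_2))$ is not developed into anything that controls where the single place $Q_1$ can be sent. The correct semiregularity count above makes any such fallback unnecessary.
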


\begin{proof}
    The automorphism $\pi$ switches $Q_1$ and $Q_2$, so they are in fact in the same $\aut(F)$-orbit. Denote this orbit by $\mathcal{O}$, and suppose exists a place $Q\in \mathcal{O}\setminus \Omega$. 

    We know that the $q^2$ automorphisms of $E$ fix both $Q_1$ and $Q_2$, and so do the $q+1$ automorphisms of $H_1$. This means that the stabilizer of, say, $Q_1$ in $\aut(F)$ has order at least $q^2(q+1)$. Moreover, the fixed field of $E$ is $K(x)$, and the only ramified places in $F/K(x)$ are $P_1$ and $P_2$, so $E$ acts with long orbits outside of $\Omega$. Since $H_1$ acts on $K(x)$ by mapping $x$ to $\lambda x $, for $\lambda \in \Fqq$ with $\lambda^{q+1}=1$, we also find that $H_1$ acts with long orbits outside of $\Omega$. This means that
    $$
        |\mathcal{O}| = 2 + k\cdot q^2(q+1),
    $$
    for some $k \in \mZ_{\geq 0}$. 

    If $k = 0$ we are done. Otherwise, the orbit-stabilizer theorem combined with the observations above tells us that 
    $$
        |\aut(F)| \geq q^2(q+1)(2 + q^2(q+1)).
    $$
    On the other hand, $F$ is ordinary of genus $g := q^2-1$ so by \cite[Theorem 3]{nakajima_p-ranks_1987} we have
    $$
        |\aut(F)| \leq 84 g (g-1) = 84(q^2-1)(q^2-2).
    $$
    Combining the inequalities yields a contradiction unless $q \leq 7$, so the claim follows.
    
    
\end{proof}

For the next lemma, we need some information on the Weierstrass semigroup at $Q_1$ and $Q_2$. In fact, it is known that an Artin-Schreier extension with exactly two ramified places, both totally ramified, is classical and that the two ramified places have ordinary gapsequences, i.e., the gaps at $Q_1$ and $Q_2$ are exactly $1, 2, \dots , q^2 - 1$ (see, e.g., \cite{Garcia_1989}). In our case, these gaps can be found by considering (regular) differentials of the form
$$
    \omega_{k,l} := s^kt^l \frac{dx}{x},
$$
for $k,l \in \{0,1, \dots, q-1\}^2 \setminus(q-1,q-1)$. The fact we will need, which follows from these observations, is that the Riemann-Roch spaces $\mathcal{L}(qQ_1)$ and $\mathcal{L}(qQ_2)$ have dimension one, i.e., they are simply equal to $K$.

\begin{lemma}\label{lem_stab_Singer}
    If $\Omega = \{Q_1,Q_2\}$ is an orbit under $\aut(F)$, then the stabilizer of $Q_1$ in $\aut(F)$, which also fixes $Q_2$, is isomorphic to $E_{q^2} \rtimes C_{q+1}$. More precisely, it can be written as $E\rtimes H_1$, with notation as above. 
\end{lemma}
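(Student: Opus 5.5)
Let $S$ denote the stabilizer of $Q_1$ in $\aut(F)$. Since $\Omega$ is an orbit and $Q_1\in\Omega$, every $\sigma\in S$ also fixes $Q_2$. The plan is to pin down $\sigma$ by how it acts on $s$ and $t$, using the Riemann-Roch spaces $\mathcal{L}(qQ_1)=\mathcal{L}(qQ_2)=K$ recorded just before the lemma.

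\textbf{Step 1: $\sigma$ is affine in $t$ and in $s$.} We have $(t)_\infty = qQ_1+Q_2$, so $t\in\mathcal{L}(qQ_1+Q_2)$. Every $\sigma\in S$ fixes $Q_1$ and $Q_2$, hence preserves this space. Its dimension is $2$: it contains $1$ and $t$, and by Riemann-Roch it is at most one more than $\dim\mathcal{L}(qQ_1)=1$. So $\mathcal{L}(qQ_1+Q_2)=\langle 1,t\rangle$, which forces $\sigma(t)=at+b$ with $a\in K^\times$ and $b\in K$. In the same way, $\mathcal{L}(qQ_2+Q_1)=\langle 1,s\rangle$ gives $\sigma(s)=cs+d$.

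\textbf{Step 2: determine the constants.} Apply $\sigma$ to the relation $(s^q-t)(t^q-s)=1$:
\[
(c^qs^q+d^q-at-b)(a^qt^q+b^q-cs-d)=1 .
\]
Compare this with the original relation. The cleanest way is through $x=t^q-s$: $\sigma(x)$ has divisor $q^2(Q_2-Q_1)$, the same as $x$, so $\sigma(x)=\kappa x$ for some $\kappa\in K^\times$. Writing $\sigma(x)=a^qt^q-cs+(b^q-d)$ and using that $t^q$, $s$, $1$ are linearly independent over $K$ (different pole orders at $Q_1$), we get $a^q=c=\kappa$ and $b^q=d$. Likewise $s^q-t=1/x$ is sent to $\kappa^{-1}/x$, which gives $c^q=a=\kappa^{-1}$ and $d^q=b$. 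Hence $\kappa^{q+1}=1$, that is $\kappa=\lambda$ with $\lambda^{q+1}=1$, $c=\lambda$, $a=\lambda^{-1}$, and $b^{q^2}=b$, $d=b^q$.

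\textbf{Step 3: identify and count.} Compose $\sigma$ with $\delta_{\lambda}^{-1}\in H_1$. The result is a translation $(s,t)\mapsto(s+b^q,\,t+b)$ with $b\in\Fqq$. Exactly $q^2$ such maps exist, and they are precisely the elements of $E$: for $q$ odd, $b=\alpha+\beta\sqrt\varepsilon$ gives $b^q=\alpha-\beta\sqrt\varepsilon$, and for $q$ even the analogous statement holds with $\xi$. Therefore $S=EH_1$. Since $E\cap H_1=1$ and $E$ is normal (it is the Galois group of $F/K(x)$, and $H$ normalizes it), $S=E\rtimes H_1\simeq E_{q^2}\rtimes C_{q+1}$.

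\textbf{Main obstacle.} The delicate point is the dimension count in Step 1, namely that $\mathcal{L}(qQ_1+Q_2)$ is only $2$-dimensional. This relies on $\mathcal{L}(qQ_1)=K$ from the gap-sequence discussion together with the general bound $\ell(D+P)\le\ell(D)+1$. Everything after that is linear-algebra bookkeeping.
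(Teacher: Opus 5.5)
Your proof is correct, and it takes a more direct route than the paper. The paper first uses the structure theorem for one-point stabilizers to write $A_{Q_1}=E'\rtimes C$, with $E'$ a Sylow $p$-subgroup and $C$ cyclic of order prime to $p$. It then invokes Nakajima's bound on $p$-subgroups of ordinary function fields, with separate estimates for $q$ odd and $q$ even, to force $E'=E$. Only after that does it apply the Riemann--Roch argument ($\mathcal{L}(qQ_2+Q_1)=K\oplus sK$, $\mathcal{L}(qQ_1+Q_2)=K\oplus tK$, and the divisor of $x$), and only to a generator of $C$, to show $C=H_1$.

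You instead apply that same Riemann--Roch argument to every element of the stabilizer. You keep the constant terms $b,d$ and pin them down through $\sigma(x)=\kappa x$, $\sigma(1/x)=\kappa^{-1}/x$ and linear independence from distinct pole orders. The resulting translation part is then absorbed into $E$ after composing with $\delta_\lambda^{-1}$.

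This buys several things. The argument is uniform in $p$ and needs neither Nakajima's theorem nor the splitting $E'\rtimes C$; it only uses $\dim\mathcal{L}(qQ_i)=1$ and the divisors of $s,t,x$. It also avoids the paper's step ``the order of $\alpha$ is coprime to $p$, so $\mu=0$'', which as stated needs extra justification. For $\lambda\neq 1$, a conjugate of $\delta_\lambda$ by a nontrivial element of $E$ has order prime to $p$ but a nonzero constant term; one must use, for example, that $C\supseteq H_1$ is cyclic and hence centralizes $H_1$.

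What the paper's route buys is a reusable template: stabilizer structure plus bounds on $p$-subgroups of ordinary curves. That template is applied again for the Zieve and extended Zieve function fields, where the relevant Riemann--Roch spaces are not as transparent as $\langle 1,s\rangle$ and $\langle 1,t\rangle$.
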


\begin{proof}
    Suppose $\Omega$ is an orbit and denote by $A_{Q_1}$ the stabilizer of $Q_1$ in $\aut(F)$. By \cite[Theorem 11.49]{HKT} we may write
    $$
        A_{Q_1} = E' \rtimes C, 
    $$
    where $E'$ is a $p$-Sylow subgroup of $A_{Q_1}$ and $C$ is a cyclic subgroup of order coprime to $p$. We know that $E \subseteq A_{Q_1}$, so $E \subseteq E'$, and after possibly replacing $C$ one of its conjugates we may assume that $H_1 \subseteq C$. The claim is now that $E' = E$ and $C = H_1$.

    The first part follows from \cite[Theorem 1 (i)]{nakajima_p-ranks_1987} since $F$ is ordinary. In fact, we have
    $$
        |E'| \leq \frac{p}{p-2}(q^2-2) < pq^2,
    $$
    for $q$ odd, and 
    $$
        |H_2|\cdot|E'| \leq 4(q^2-1) < 2(2q^2),
    $$
    for $q$ even, so $E' = E$ in any case. 

    To show the second part of the claim we consider the Riemann-Roch space $\mathcal{L}(qQ_2 + Q_1)$. It is clear from our previous observations that 
    $$
        \dim \mathcal{L}(qQ_2 + Q_1) \leq \dim \mathcal{L}(qQ_2) + 1 = 2.
    $$
    Since $s \in \mathcal{L}(qQ_2 + Q_1)\setminus K$ the dimension is in fact $2$, and we may write
    $$
        \mathcal{L}(qQ_2 + Q_1) = K \oplus sK.
    $$
    Now any element, and in particular a fixed generator $\alpha \in C$, fixes both $Q_1$ and $Q_2$ and hence acts on $\mathcal{L}(qQ_2 + Q_1)$. This means that there exists $\mu,\lambda \in K$ such that
    $$
        \alpha(s) = \mu  + \lambda s.
    $$
    The order of $\alpha$ is coprime to $p$ so $\mu = 0$, i.e., 
    $$
        \alpha(s) = \lambda s.
    $$
    By considering $\mathcal{L}(qQ_1 + Q_2)$ we similarly get that 
    $$
        \alpha(t) = \lambda't,
    $$
    for some $\lambda' \in K$, and 
    $$
        \alpha(x) = \lambda'' x,
    $$
    for some $\lambda'' \in K$, since $\alpha$ fixes the divisor of $x$. By combining the above and using the defining equation for $F$ we find 
    $$
        \lambda''(t^q-s) = \alpha(x) = \alpha(t^q-s) = \lambda'^qt^q - \lambda s,
    $$
    and
    $$
        (\lambda'')^{-1}(s^q-t) =  \alpha(1/x) = \alpha (s^q-t) = \lambda^qs^q - t^q.
    $$
    The first equation implies $\lambda = \lambda''$ and it follows from the second equation that $\lambda^q = (\lambda'')^{-1}$ and $\lambda' = (\lambda'')^{-1}$. We conclude that $\lambda^{q+1} = 1$ and $\lambda' = \lambda^{-1}$, so $\alpha$ is an element of $H_1$, i.e., $C=H_1$ as wished.
\end{proof}

The proof of Theorem \ref{thm_full_aut_singer} now follows from combining the two lemmas.

\begin{proof}[Proof of Theorem \ref{thm_full_aut_singer}.]
    Write $A$ for $\aut(F)$ and suppose $q > 7$. By Lemma \ref{lem_orbit_Singer} we have that $\Omega = \{Q_1,Q_2\}$ is an $A$-orbit and it then follows from Lemma \ref{lem_stab_Singer} that the stabilizer of $Q_1$ is isomorphic to $E_{q^2} \rtimes C_{q+1}$. The orbit-stabilizer theorem implies
    $$
        |A| = |\Omega| \cdot |A_{Q_1}| = 2q^2(q+1),
    $$
    so we conclude that $A = \aut(F)$ is nothing more than $G = E\rtimes H$, as claimed.
\end{proof}

\begin{remark}
    For $q = 2$, computations with a computer using, e.g., Magma shows that the automorphism group contains the unique simple group of order 168 (while in this case $|G| = 2^3(2+1) = 24$). In fact $F$ is, in this case, isomorphic to the function field of the Klein quartic, so the group of order 168 is the full automorphism group.
    
    We leave it as an open problem to determine (by means of theoretical or computational arguments) the full automorphism group for $q \in \{ 3,4,5,7\}$.
\end{remark}

\subsection{Automorphisms of the Zieve function field}

For $q = 2^n$, with $n \in \mN$, we claimed in Section \ref{sec_Zieve_curve} that the ordinary function field $Z_q$ has an automorphism group isomorphic to the semidirect product of an elementary abelian group of order $q^2$ and $\PGL(2,q)$. We will show that this is true and that this group is the full automorphism group, except possibly for small values of $q$:

\begin{theorem}\label{thm_aut_even}
For $q > 8$, we have
$$
    \aut(Z_q) \simeq E_{q^2} \rtimes \PGL(2,q).
$$
In particular, the order of the automorphism group is $q^3(q^2-1)$, so $|\aut(Z_q)| \sim g(Z_q)^{5/3}$.
\end{theorem}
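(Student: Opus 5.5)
The argument follows the Singer case: first show that a natural set of places is an orbit of $\aut(Z_q)$, then compute the stabilizer of one of its places using Riemann--Roch spaces.

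\emph{Setup and lower bound.} Let $G$ be the group generated by the $\Fq$-translations in $y,z$ (call this $E$), the maps $\delta_\lambda$, $\pi$ and $\gamma_{\mu,\nu}$. I would check that $E$ is normal in $G$ and that $G/E$ acts faithfully on $K(x)=Z_q^E$ as the group generated by $x\mapsto \lambda x$, $x\mapsto x+\mu$ and $x\mapsto 1/x$. This group is $\PGL(2,q)$. Since $q$ is even, $E$ and $\PGL(2,q)$ have coprime orders, so Schur--Zassenhaus gives a complement and $G\simeq E_{q^2}\rtimes\PGL(2,q)$, of order $q^3(q^2-1)$.

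\emph{The orbit.} Let $\Omega$ be the set of places of $Z_q$ lying over the $q+1$ places of $K(x)$ that are $\Fq$-rational, i.e.\ over $x=\lambda$ for $\lambda\in\Fq$ and over $x=\infty$. These are exactly the places of $Z_q$ with nontrivial stabilizer in $E$. Over each $x=\lambda$ there are $q$ places, totally ramified in $K(x,y)/K(x)$ and split in the $z$-extension. Symmetrically, over $x=\infty$ there are $q$ places. Hence $|\Omega|=q(q+1)$, and $G$ acts transitively on $\Omega$. The stabilizer in $G$ of a place $Q_\infty$ over $x=\infty$ has order $q^2(q-1)$, and it contains the $z$-translations and the $\gamma_{\mu,\nu}$.

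Now suppose the $\aut(Z_q)$-orbit of $Q_\infty$ strictly contains $\Omega$. Outside $\Omega$, the group $E$ acts with long orbits and $\PGL(2,q)$ acts on the remaining places of $K(x)$ with orbits of length at least $q(q-1)$. So the orbit has size at least $q(q+1)+q^2\cdot q(q-1)/2$ or so, and $|\aut(Z_q)|\ge q^2(q-1)\cdot q^4$ roughly. This contradicts Nakajima's bound $84g(g-1)\approx 84q^6$ once $q>8$; the constant needs checking, and that is where the threshold $q>8$ should come from. Hence $\Omega$ is an orbit.

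\emph{The stabilizer.} By \cite[Theorem 11.49]{HKT}, $A_{Q_\infty}=E'\rtimes C$. Nakajima's bound on $p$-subgroups of ordinary function fields, \cite[Theorem 1]{nakajima_p-ranks_1987}, gives $|E'|\le 4(g-1)$ in characteristic $2$, roughly $4q^3$. I then need to force $|E'|=q^2$, and I expect this to be the main obstacle: the bound only gives $|E'|<4q^3$, so I must exclude $|E'|\in\{2q^2,q^3,2q^3\}$. Here I would use the Weierstrass gap structure at $Q_\infty$. The places of $\Omega$ are the ramified places of $Z_q/K(x)$ only partially, so I would compute small Riemann--Roch spaces $\mathcal{L}(mQ_\infty)$ and $\mathcal{L}(mQ_\infty+D)$, with $D$ a divisor supported on the other places over $x=\infty$. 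The functions involved are $x$ (with divisor supported on $\Omega$), $z$ and $y$. An automorphism fixing $Q_\infty$ must act linearly on these spaces, hence map $x\mapsto ax+b$ and $z\mapsto cz+\ell(y,x)$. I would then check, as in Lemma~\ref{lem_stab_Singer}, that the defining equations force the automorphism into $G$.

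\emph{Conclusion.} The previous step gives $A_{Q_\infty}=G_{Q_\infty}$, of order $q^2(q-1)$. The orbit--stabilizer theorem then yields $|\aut(Z_q)|=q(q+1)\cdot q^2(q-1)=|G|$, so $\aut(Z_q)=G$.
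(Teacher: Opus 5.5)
Your proposal has one false step, an orbit bound too weak for the stated range, and an unproved key step.

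\textbf{Complement and orbit bound.} Schur--Zassenhaus does not apply. $|E|=q^2$ and $|\PGL(2,q)|=q(q^2-1)$ are both divisible by $q$, so they are not coprime. The splitting of $G$ over $E$ is exactly what the $\rtimes$ in the statement asserts, so it must be proved. The paper constructs the complement explicitly. Take $\nu_0^q+\nu_0=1$; then $S_\mu=\gamma_{\mu,\mu\nu_0}$ and $T=\pi$ satisfy Dickson's defining relations for $\PSL(2,q)=\PGL(2,q)$. Since $\langle T,S_\mu\rangle$ acts on $K(x)$ as $\PGL(2,q)$, it is isomorphic to $\PGL(2,q)$ and meets $E$ trivially. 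Alternatively, Gasch\"utz's theorem applied to the Sylow $2$-subgroup $EH_1=E\rtimes\{\gamma_{\mu,\mu\nu_0}\}$ would give the splitting.

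Separately, your orbit estimate is too crude. It gives $|A|\gtrsim q^2(q-1)\cdot q^3(q-1)\approx q^7$, and against $84g(g-1)\approx 84q^6$ this yields a contradiction only for $q\ge 128$; it fails at $q=64$. To reach $q>8$ you must split into cases:
\begin{itemize}
\item If the orbit of $P$ meets $\Omega_2$, then $|A_P|=|A_Q|$ is divisible by $\mathrm{lcm}(q^2(q-1),q+1)=q^2(q^2-1)$.
\item If it meets a long $G$-orbit, the orbit has at least $q(q+1)+q^3(q^2-1)$ elements.
\end{itemize}
Both cases make $|A|$ of order $q^8$.

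\textbf{Stabilizer.} This step is only a plan, and the Riemann--Roch mechanism you sketch is circular. Neither $x$ nor $z$ lies in any $\mathcal{L}(mQ_\infty)$. The function $x$ has a pole of order $q$ at each of the $q$ places over $x=\infty$. The function $z$ additionally has simple poles at the $q(q-1)$ places over $x\in\Fq^\times$, so a $D$ supported over $x=\infty$ would not even capture $z$.

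To extract $x\mapsto ax+b$ from some $\mathcal{L}(mQ_\infty+D)$, you need $A_{Q_\infty}$ to preserve the fibre over $x=\infty$. That is essentially the statement that $A_{Q_\infty}$ normalizes $E$. Knowing only that $\Omega$ is an orbit does not give this, so the cases $|E'|\in\{2q^2,q^3,2q^3\}$ remain open.

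The missing ingredient in the paper is group-theoretic:
\begin{enumerate}
\item Since $Z_q$ is ordinary, second ramification groups are trivial, so the Sylow $2$-subgroup $S$ of $A_P$ is abelian (Nakajima's Theorem 2).
\item Hence the group $H_1^0$ of $z$-translations is normal in $S$. Then $S/H_1^0$ embeds into $\aut(K(x,y))$, the known automorphism group of the Artin--Mumford field, as $2$-elements fixing a place. This forces $|S/H_1^0|\le q$, so $S=H_1$.
\item The cyclic part $C$ then acts on $K(y)=Z_q^{H_1}$ fixing the zero and the pole of $y$, so $y\mapsto\lambda y$.
\item Because $\Omega_1$ is an orbit, $\lambda\in\Fq^\times$. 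This gives $|C|\le q-1$ and $A_P=G_P$.
\end{enumerate}
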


To show the theorem, first recall that
\begin{align*}
    E &:= \{ \sigma_{\alpha, \beta}: (x,y,z) \mapsto (x, y + \alpha, z + \beta) \mid \alpha, \beta \in \Fq \} \simeq E_{q^2}, \\
    H_1 &:= \{\gamma_{\mu,\nu} : (x,y,z) \mapsto (x + \mu, y, z + \mu^2 y + \nu) \mid \mu \in \Fq, \nu^q + \nu = \mu \} \simeq E_{q^2}, \\
    H_2 &:= \{ \delta_\lambda : (x,y,z) \mapsto (\lambda x, \lambda^{-1}y, \lambda z)\mid \lambda \in \Fq^\times \} \simeq C_{q-1} \text{ and }\\
    H_3 &:= \langle \pi : (x,y,z) \mapsto (1/x, z, y) \rangle,   
\end{align*}

are subgroups of $\aut(Z_q)$. Moreover, if we define 
$$
    G := \langle E, H_1, H_2, H_3 \rangle \subseteq \aut(Z_q),
$$
then a direct check shows that $E$ is normal in $G$. We will determine the structure of $G$ and then show that $\aut(Z_q)$ contains nothing more than $G$.

\begin{lemma}\label{lem_G_aut_even}
    We have 
    $$
        G = E \rtimes H,
    $$
    for a subgroup $H\subseteq G$, with $H \simeq \PGL(2,q)$.
\end{lemma}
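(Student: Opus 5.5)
Since $E$ is the Galois group of $Z_q/K(x)$ and $E$ is normal in $G$, restriction to $K(x)$ gives a homomorphism $\rho: G \to \aut(K(x)) \simeq \PGL(2,K)$ with kernel $G\cap \mathrm{Gal}(Z_q/K(x)) = E$. First I will identify the image. The generators act on $x$ as $x\mapsto x+\mu$ ($\mu\in\Fq$), $x\mapsto \lambda x$ ($\lambda\in\Fq^\times$) and $x\mapsto 1/x$. These Möbius maps generate $\PGL(2,q)$: the affine maps form a Borel subgroup $B$ of order $q(q-1)$, and $x\mapsto 1/x$ lies outside $B$. Since $B$ is maximal, they generate the whole group. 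Hence $\rho(G)=\PGL(2,q)$ and $|G| = q^2\cdot q(q^2-1)$, so $G/E\simeq \PGL(2,q)$.

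It remains to find a complement $H$ of $E$ in $G$, i.e. to split the extension $1\to E\to G\to \PGL(2,q)\to 1$. I would construct it explicitly. Let $B' := \langle H_1,H_2\rangle$. The subgroup $H_1$ has order $q^2$ and maps onto the translations with kernel $H_1\cap E=\{\gamma_{0,\nu}:\nu\in\Fq\}$, which is nontrivial. So I first choose a section inside $H_1$: for each $\mu$, pick one $\nu=\nu(\mu)$ with $\nu^q+\nu=\mu$, additively in $\mu$. This is possible because $\nu\mapsto\nu^q+\nu$ is an $\F_2$-linear surjection $\Fqq\to\Fq$ with kernel $\Fq$, so it has an $\F_2$-linear section $s$. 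Then $\gamma_{\mu,s(\mu)}\gamma_{\mu',s(\mu')}$ sends $z\mapsto z+\mu^2y+\mu'^2y+s(\mu)+s(\mu')$, which is $\gamma_{\mu+\mu',s(\mu+\mu')}$. Thus $U:=\{\gamma_{\mu,s(\mu)}\}\simeq E_q$ is a subgroup.

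Next, $H_2$ must normalize $U$. Conjugating, $\delta_\lambda\gamma_{\mu,\nu}\delta_\lambda^{-1}$ gives $x\mapsto x+\lambda\mu$ and $z\mapsto z+(\lambda\mu)^2 y+\lambda\nu$. So I need $s(\lambda\mu)=\lambda s(\mu)$ for all $\lambda\in\Fq^\times$, i.e. $s$ must be $\Fq$-linear. Since $\nu\mapsto\nu^q+\nu$ is $\Fq$-linear, I can take $s(\mu)=\mu c$ with $c^q+c=1$ fixed, and this holds. Then $B_0:=U\rtimes H_2$ maps isomorphically onto the Borel subgroup. Finally I set $H:=\langle U,H_2,\pi\rangle$ and show $H\cap E=1$, which gives $H\simeq\PGL(2,q)$ and $G=E\rtimes H$.

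The main obstacle is this last step. The cleanest route I would try is to verify the defining relations of $\PGL(2,q)$ for the lifts, using the presentation as the group generated by $B$ and an involution $w$ with $w^2=1$, $w h w = h^{-1}$ for $h$ diagonal (inversion of the torus), and the Steinberg-type relation $w u_\mu w = u_{1/\mu}\, h\, w\, u_{1/\mu}$ for the appropriate diagonal $h$. The facts $\pi^2=1$ and $\pi\delta_\lambda\pi=\delta_{\lambda^{-1}}$ are immediate. The remaining relation is a direct computation of $\pi\gamma_{\mu,c\mu}\pi$ versus the corresponding word, tracking the $y$- and $z$-components. Here I may need to adjust $\pi$ or $c$ by an element of $E$.

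As a fallback I would use a cohomological argument. The $2$-part of the splitting is decided at a Sylow $2$-subgroup, since a group extension with abelian kernel splits iff it splits over a Sylow $p$-subgroup for each $p$ dividing $|E|$ (Gaschütz). The Sylow $2$-subgroup of $\PGL(2,q)$ is the translation group, and it lifts to $U$ by construction. This gives the splitting abstractly, and hence a complement $H\simeq \PGL(2,q)$.
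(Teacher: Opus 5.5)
Your proposal is correct, but the step that produces the complement differs from the paper's.

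\textbf{The paper's route.} It takes the same lift $H_1'=\{\gamma_{\mu,\mu\nu_0}\}$ of the translations as your $U$ and sets $H=\langle \pi,H_1'\rangle$. It asserts that Dickson's defining relations of $\PSL(2,q)=\PGL(2,q)$ hold for these generators, so $H$ is a quotient of $\PGL(2,q)$. It then gets $H\cap E=1$ because $H$ still acts on $K(x)$ as all of $\PGL(2,q)$.

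\textbf{Your routes.} Your primary route, via the Bruhat/Steinberg presentation, is the same kind of argument with a different presentation. As written, it stops exactly where the paper says ``direct check''. Your Gasch\"utz fallback is what actually completes the proof. $EU$ has order $q^3$ and odd index $q^2-1$ in $G$, so it is a Sylow $2$-subgroup, and $U$ complements the abelian normal $2$-subgroup $E$ in it. Hence $E$ has a complement in $G$, necessarily isomorphic to $G/E\simeq\PGL(2,q)$.

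\textbf{Comparison.} The Gasch\"utz argument gives existence with no relation checking; you do not even need $H_2$ to normalize $U$. Its cost is that it does not identify the complement. The paper identifies the complement, but leaves relation (iii) unverified.

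\textbf{No adjustment is needed.} Your worry that $\pi$ or $\nu_0$ (your $c$) might need adjusting is unfounded. Every element of $G$ has the form $x\mapsto (ax+b)/(cx+d)$, $y\mapsto \Delta^{-1}(d^2y+c^2z)+\epsilon_1$, $z\mapsto\Delta^{-1}(b^2y+a^2z)+\epsilon_2$, where $\Delta=ad+bc$, $\epsilon_1^q+\epsilon_1=\Delta^{-1}cd$ and $\epsilon_2^q+\epsilon_2=\Delta^{-1}ab$. Because $G$ is a group, $A\mapsto \Delta^{-1}(cd,ab)$ is an $\Fq^2$-valued cocycle. Therefore the choice $(\epsilon_1,\epsilon_2)=\nu_0\Delta^{-1}(cd,ab)$ is closed under composition and maps bijectively onto $\PGL(2,q)$. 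It contains $\pi$, $\delta_\lambda$ and $\gamma_{\mu,\mu\nu_0}$. So $\langle U,H_2,\pi\rangle$ is already a complement, which also confirms that the paper's $H$ is one.
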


\begin{proof}
    Choose $\nu_0$ such that $\nu_0^q+\nu_0=1$. For each $\mu \in \mathbb{F}_q$, we have  $\gamma_{\mu, \mu \nu_0} \in H_1$ because
    $$
        \left(\mu \nu_0\right)^q+\mu \nu_0=\mu\left(\nu_0^q+\nu_0\right)=\mu,
    $$
    and one can easily check that $H_1':=\left\{\gamma_{\mu, \mu \nu_0} \mid \mu \in \mathbb{F}_q\right\}$ is a subgroup of $H_1$.

    Define $H := \langle \pi, H_1' \rangle$. We will show that $H \simeq \PGL(2,q)$, and that $H$ intersects trivially with $E$. In accordance with the notation of \cite{Dickson}, we write $T := \pi$, and $S_\mu := \gamma_{\mu,\mu\nu_0}$. Then, $H$ is generated by $T$ and $\{S_\mu\}_{\mu\in\Fq}$, and it can be checked directly that the following relations hold: 
    
    \vspace{0.2cm}
    \begin{enumerate}[label=(\roman*)]
        \item $S_0 = T^2 = \mathrm{Id}$,
        \item $S_\mu S_{\lambda} = S_{\mu + \lambda}$, for $\mu,\lambda \in \Fq$, and
        \item $S_\lambda T S_\mu T S_{\frac{\lambda-1}{\lambda\mu -1}} T S_{-(\lambda \mu - 1)} T S_\frac{\mu-1}{\lambda\mu -1} T = \mathrm{Id}$.
    \end{enumerate}
    \vspace{0.2cm}

    By \cite[Theorem 278]{Dickson} and its corollary, these are the relations for $\mathrm{PSL}(2,q)$. Since $q$ is even we have $\mathrm{PSL}(2,q)\simeq \PGL(2,q)$, so $H$ is isomorphic to a quotient of $\mathrm{PGL}(2,q)$. 

    On the other hand, the projection $G \to G/E$ induces an action of $H$ on $\Fq(x)$, and from the explicit description of the automorphisms, we see that $H$ in this way acts as $\PGL(2,q)$ on $\Fq(x)$. The kernel of the action is exactly $H \cap E$, since any $\varphi \in H$ which fixes $x$ also fixes $y^q - y$ and $z^q - z$, and hence is in $E$. By combining the above we get both that $H \simeq \PGL(2,q)$ and that the intersection of $E$ and $H$ is trivial, so $E \rtimes H$ is a subgroup of $G$ of order $q^3(q^2+1)$.

    Finally, consider the action of $G/E$ on $\Fq(x)$. By a similar argument as above, we get that $G/E$ acts faithfully as $\PGL(2,q)$ on $\Fq(x)$. It follows that $G/E \simeq \PGL(2,q)$, and in particular
    $$
        |G| = q^3(q^2-1),
    $$
    so $G = E \rtimes H$ as wished.
\end{proof}

Proving Theorem \ref{thm_aut_even} now amounts to showing that $\aut(Z_q)$ has no more elements than those in $G$. To this end, we study the short orbits of the action of $G$ on the places of the function field $K(x,y,z)$. \newline

Denote by $\Omega_1$ the set of places lying above the $\F_q$-rational places of $K(x)$, and let $\Omega_2$ be the set of places lying above the places of $K(x)$ that are $\Fqq$-rational but not $\Fq$-rational. We know, e.g., from \cite[Theorem 11.92]{HKT}, that the action of $\PGL(2,q)$ on $K(x)$ gives rise to exactly two short orbits. One is of order $q+1$ and consists of all the $\F_q$-rational places and the other one is of order $q^2-q$ and consists of all the $\F_{q^2}$-rational places that are not $\F_q$-rational. Moreover, the extension $K(x,y,z)/K(x)$ is Galois, with Galois group $E \subseteq G$, and the ramified places are in $\Omega_1$, so we conclude that the short orbits of $G$ are exactly $\Omega_1$ and $\Omega_2$. \newline

Note that $|\Omega_1| = q(q+1)$, $|\Omega_2| = q^2(q^2-q)$, and $|G| = q^3(q^2-1)$, so the stabilizer, $G_P$, of a place $P \in \Omega_1$ has order $q^2(q-1)$ and the stabilizer, $G_Q$, of a place $Q\in\Omega_2$ has order $(q+1)$. We will show that $\Omega_1$ is also an orbit under the full automorphism group, except possibly for small values of $q$:

\begin{lemma}\label{lem_orbit_even}
    If $q > 8$, then $\Omega_1$ is an orbit under $\aut(Z_q)$.
\end{lemma}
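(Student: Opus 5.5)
Mirroring Lemma \ref{lem_orbit_Singer}, I would argue by contradiction using orbit sizes together with Nakajima's bound $|\aut(Z_q)| \leq 84g(g-1)$ for ordinary function fields.

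Let $\mathcal{O}$ be the $\aut(Z_q)$-orbit containing $\Omega_1$; it does contain $\Omega_1$, since $\Omega_1$ is already a $G$-orbit. Suppose $\mathcal{O}\neq\Omega_1$. Then $\mathcal{O}$ is a union of $G$-orbits, and every $G$-orbit other than $\Omega_1$ is either $\Omega_2$, of size $q^2(q^2-q)$, or a long orbit, of size $|G| = q^3(q^2-1)$. Hence
$$|\mathcal{O}| \geq q(q+1) + q^2(q^2-q) = q^4 - q^3 + q^2 + q.$$
The stabilizer in $\aut(Z_q)$ of a place $P\in\Omega_1$ contains $G_P$, which has order $q^2(q-1)$. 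By orbit--stabilizer,
$$|\aut(Z_q)| \geq q^2(q-1)\,(q^4-q^3+q^2+q),$$
which is roughly $q^7$.

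For the upper bound, $g = g(Z_q) = q^3-q^2-q+1 = (q-1)^2(q+1)$, so Nakajima's bound \cite[Theorem 3]{nakajima_p-ranks_1987} gives $|\aut(Z_q)| \leq 84g(g-1)$, which is roughly $84q^6$. Comparing the two bounds gives $q^7 \lesssim 84q^6$, a contradiction as soon as $q$ is larger than about $84$. This only settles $q \geq 128$ directly.

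The main obstacle is closing the gap for $q = 16, 32, 64$. I would try two refinements.

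First, use the finer orbit structure. If $\mathcal{O}\neq\Omega_1$, then $\aut(Z_q)$ acts transitively on $\mathcal{O}$, so every place of $\mathcal{O}$, not just those in $\Omega_1$, has a stabilizer of the same order $|\aut(Z_q)_P| \geq q^2(q-1)$. A place outside $\Omega_1\cup\Omega_2$ then has stabilizer order at least $q^2(q-1)$, while its $G$-stabilizer is trivial. Similarly, a place $Q\in\Omega_2$ has $G_Q$ of order only $q+1$, yet would need a much larger stabilizer. I would combine this with Nakajima's bound on $p$-subgroups fixing a place of an ordinary curve, \cite[Theorem 1]{nakajima_p-ranks_1987}, which gives $|S| \leq \tfrac{p}{p-2}(g-1)$ for odd $p$ and a comparable bound when $p=2$. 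The aim is to show that the Sylow $2$-subgroup of the stabilizer of $P\in\Omega_1$ is exactly the group of order $q^2$ coming from $E$ and $H_1$, and that its fixed field has only the expected ramification.

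Second, I would replace $84g(g-1)$ by the sharper refined bounds of \cite{Gunby_Smith_Yuan_2015} or \cite{Lia_Timpanella_2021}, whose constants are much smaller than $84$. With such a bound, the inequality $q^2(q-1)(q^4-q^3+q^2+q) > \text{bound}$ should hold for all $q>8$. I expect this to be the route the authors take, since the threshold $q>8$ fits a constant of this size. The remaining step would be a direct numerical check of that inequality for $q=16$.
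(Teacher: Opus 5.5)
There is a genuine gap. As you note yourself, your inequality gives a contradiction only for $q\ge 128$, and neither refinement is carried out, so $q\in\{16,32,64\}$ remain unproved. You lose a factor of $q$ by pairing two worst cases that cannot occur together: the smallest possible enlargement of the orbit (by $\Omega_2$) and the smallest possible stabilizer ($G_P$).

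The missing idea is to split into cases.
\begin{itemize}
\item If the orbit $\mathcal{O}$ of $P\in\Omega_1$ meets a long $G$-orbit, then $|\mathcal{O}|\ge q(q+1)+q^3(q^2-1)$. Together with $|A_P|\ge q^2(q-1)$ this gives $|\aut(Z_q)|\ge (q^5-q^3+q^2+q)(q^3-q^2)$.
\item If instead $\mathcal{O}$ contains some $Q\in\Omega_2$, then $|A_P|=|A_Q|$, since $P$ and $Q$ lie in the same orbit. So $|A_P|$ is divisible by both $|G_P|=q^2(q-1)$ and $|G_Q|=q+1$. As $q$ is even these are coprime, hence $|A_P|\ge q^2(q^2-1)$ and $|\aut(Z_q)|\ge (q^4-q^3+q^2+q)(q^4-q^2)$.
\end{itemize}
Both lower bounds are of order $q^8$. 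The smaller one, $q^8-q^7-q^6+2q^5-q^3$, exceeds Nakajima's bound $84g(g-1)=84(q^6-2q^5-q^4+3q^3-q)$ for every $q\ge 16$ but not for $q=8$. That is where the threshold $q>8$ comes from, and it is exactly the paper's proof. No refinement of $84g(g-1)$ is needed.

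Your first refinement comes close to this when you say a place $Q\in\Omega_2$ ``would need a much larger stabilizer,'' but you never draw the divisibility conclusion. Instead you move into a Sylow-subgroup analysis whose link to the orbit size you leave unexplained. That analysis belongs to the subsequent stabilizer lemma, which in the paper partly relies on the present one.

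Your second refinement is not an argument. You name no specific bound, you do not check that its hypotheses (for instance on the characteristic) hold for $p=2$, and you do not verify the numerical inequality at $q=16$. Your guess that this is the authors' route is also mistaken.
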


\begin{proof}
We write $A := \aut(Z_q)$ for convenience. Consider some $P\in \Omega_1$, and denote by $\mathcal{O}_P$ the $A$-orbit of $P$. Suppose for contradiction that $\mathcal{O}_P$ contains some place $Q$ not in $\Omega_1$. If $Q$ is in $\Omega_2$, then the stabilizer of $Q$ in $A$, $A_Q$, has order at least $\mathrm{lcm}(G_P,G_Q) = q^2(q^2-1)$, so the orbit-stabilizer theorem implies
\begin{align*}
    |A| &= |\mathcal{O}_P| \cdot |A_P| \\ 
        &\geq (|\Omega_1| + |\Omega_2|)(q^2(q^2-1)) \\
        &= (q^4-q^3+q^2+q)(q^4-q^2) \\
        &= q^8 - q^7 + 2q^5-q^4-q^3.
\end{align*}

If instead $Q \in \mathcal{O}_P$ is neither in $\Omega_1$ nor in $\Omega_2$, then $Q$ is in a long $G$-orbit, so we get 
\begin{align*}
    |A| &= |\mathcal{O}_P| \cdot |A_P| \\ 
        &\geq (|\Omega_1| + |G|)(q^2(q-1)) \\
        &= (q^5-q^3+q^2+q)(q^3-q^2) \\
        &= q^8 - q^7 -q^6 + 2q^5-q^3.
\end{align*}

Hence, in any case, if $\mathcal{O}_P$ is strictly larger than $\Omega_1$ then we have
$$
    |A| \geq q^8 - q^7 -q^6 + 2q^5-q^3.
$$

On the other hand, since $Z_q$ is ordinary and of genus $g = q^3 - q^2 - q + 1 \geq 3$, we have 
$$
    |A| \leq 84g(g-1) = 84(q^6 - 2q^5 - q^4 +3q^3 -q).
$$

Combining the two inequalities yields a contradiction for $q > 8$, and the claim follows.


\end{proof}

To finish the proof of Theorem \ref{thm_aut_even} it is now sufficient to show that the stabilizer of a place $P \in \Omega_1$ is no larger than $G_P$:

\begin{lemma}\label{lem_stabilizer_even}
     If $q>8$, then for $P \in \Omega_1$, the stabilizer of $P$ in $\aut(Z_q)$ is equal to the stabilizer of $P$ in $G$. In particular, the order of the stabilizer is $q^2(q-1)$. 
\end{lemma}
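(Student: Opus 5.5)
Following the Singer case, I would take $P\in\Omega_1$ to be a pole of $x$; this is no loss of generality, since $G$ is transitive on $\Omega_1$ and $G_P$ and $A_P$ are conjugated accordingly. By \cite[Theorem 11.49]{HKT}, write $A_P = E'\rtimes C$ with $E'$ a Sylow $p$-subgroup and $C$ cyclic of order prime to $p$. The stabilizer $G_P$ has order $q^2(q-1)$. For the pole $P$ of $x$ I expect $G_P$ to be generated by $E$, by those elements of $H_1$ that fix $P$, and by $H_2$, which fixes $x=\infty$. Thus $G_P = E''\rtimes H_2$ with $|E''|=q^2$, which I would check directly. After conjugating, $E''\subseteq E'$ and $H_2\subseteq C$. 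The goal is to show $E'=E''$ and $C=H_2$.

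For the $p$-part I would use \cite[Theorem 1 (i)]{nakajima_p-ranks_1987}, as in Lemma \ref{lem_stab_Singer}. For ordinary $F$ in characteristic $2$ that theorem gives a bound on $|E'|$ of roughly $4(g-1)$ or $g-1$ up to small factors, so it cannot rule out $|E'|=2q^2$ when $g\approx q^3$. So this easy route fails, and I would instead use the orbit structure. The group $E'$ fixes $P$, and by Lemma \ref{lem_orbit_even} $A$ acts on $\Omega_1$. Ordinary curves have trivial second ramification groups, so $E'$ acts semiregularly outside $P$ \cite{nakajima_p-ranks_1987}. Consequently $E'$ acts on $\Omega_1\setminus\{P\}$, of size $q^2+q-1$, with orbits of length $|E'|$. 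Here I intend to use Nakajima's refinement: for $p$-subgroups of the stabilizer of a point on an ordinary curve, $|E'|$ divides the number of points of the orbit minus one. Granting this, $|E'|$ divides $q^2+q-1$, which is odd, and that is impossible. That exact divisibility is too strong, so the real argument is this: the $q$ places of $\Omega_1$ over $x=\infty$ other than $P$ are fixed or permuted by $E'$. I would verify carefully that $E'$ fixes $P$ and acts freely on $\Omega_1\setminus\{P\}$. Then $|E'|$ divides $|\Omega_1|-1 = q^2+q-1$, which is odd, so $E'$ would be trivial, contradicting $E\subseteq E'$. Hence the freeness claim must be false, because $E$ fixes every place above $x=\infty$. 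The correct statement is that $E'$ acts on the set $\Omega_1(\infty)$ of $q$ places over $x=\infty$. This set is the fixed locus of $E$, and I would show it is preserved by computing the fixed points of $E$ via a Riemann--Roch characterization.

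Concretely, and this is the main obstacle, I would pass to Riemann--Roch spaces, as in Lemma \ref{lem_stab_Singer}. First I compute the pole divisors of $x$, $y$ and $z$ at the places of $\Omega_1$. Then I find a small divisor $D$ supported at $P$ for which $\mathcal{L}(D)$ has low dimension and an explicit basis, for instance $\mathcal{L}(mP)$ with $m$ the first nonzero nongap at $P$, spanned by $1$ and suitable rational functions in $x,y,z$ with a single pole at $P$. Every element of $A_P$ acts linearly on this space, and in particular on a generator $w$: it sends $w\mapsto \lambda w+\mu$. Pushing this through the defining equations, as the Singer proof does, should force each element of $A_P$ to induce an element of the stabilizer of $\infty$ in $\PGL(2,q)$ acting on $x$, and to differ from an element of $G_P$ by an element of $E$. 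This gives $A_P\subseteq G_P$, which proves the lemma. Identifying the right functions with poles only at $P$ is where I expect the difficulty. Since $x$ has $q$ poles, natural candidates are $1/y$-type combinations or $z-x y$-type expressions whose pole divisor I would need to compute.
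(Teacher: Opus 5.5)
Your proposal has a genuine gap: it never bounds $A_P$, and in particular never shows that its Sylow $2$-subgroup has order $q^2$. You rightly note that Nakajima's order bound is too weak here. However, the orbit argument you try next rests on false statements. The Riemann--Roch plan that replaces it then leaves open exactly the points that carry the proof: which function $w$ to use, why $\dim\mathcal{L}(mP)=2$, and how $w\mapsto\lambda w+\mu$ yields $A_P\subseteq G_P$.

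Concretely, $E$ is the Galois group of $Z_q/K(x)$ and acts transitively on the $q$ places over $x=\infty$, which are distinguished by the value of $y$ in $\Fq$. So $E\not\subseteq A_P$ and $E$ does not fix these places. Only $\{\sigma_{0,\beta}\}$ fixes $P$, and that subgroup already lies in $H_1$. For $P$ the pole of $x$ with $y(P)=0$ one has $G_P=H_1\rtimes H_2$. Since every element of $H_1$ fixes all $q$ places over $x=\infty$, the $2$-part of $A_P$ is also not semiregular off $P$.

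The missing idea, which the paper uses, is structural. Because $Z_q$ is ordinary, the second ramification groups are trivial, so the Sylow $2$-subgroup $S$ of $A_P$ is abelian. Hence $H_1^0=\mathrm{Gal}(Z_q/K(x,y))$ is normal in $S$, and $S/H_1^0$ embeds in the stabilizer of the zero of $y$ in $\aut(K(x,y))$. This is the generalized Artin--Mumford field, and its known automorphism group shows that the $2$-part of this stabilizer is $\{x\mapsto x+\mu\}$, of order $q$. Thus $S=H_1$. Then $C\cong A_P/S$ acts on $K(y)$ fixing the zero and the pole of $y$, so $y\mapsto\lambda y$, with $\lambda\in\Fq^\times$ by Lemma~\ref{lem_orbit_even}.

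Your Riemann--Roch route can be completed, but only with steps you have not supplied. Also, the relevant quotient is $K(y)$ rather than $K(x)$, since nothing tells you a priori that $A_P$ preserves $K(x)$. Take $P$ to be the unique zero of $y$, so $(y^{-1})_\infty=q^2P$.

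You must first prove $\mathcal{L}(q^2P)=K\oplus Ky^{-1}$. Let $\mathrm{Gal}(Z_q/K(x,y))$ act on $\mathcal{L}(m_1P)$, where $m_1$ is the first positive nongap.
\begin{itemize}
\item A non-invariant generator has the form $a+\sum_i w_iz^{p^i}$ with $a\in K(x,y)$, so it has a pole at the other places over $x=\infty$, which is impossible.
\item An invariant generator lies in $K(x,y)$ and has pole order $m_1/q$ at the zero of $y$ there. The same argument over $K(y)$ shows this order is at least $q$, so $m_1\ge q^2$.
\end{itemize}

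Then every $\varphi\in A_P$ satisfies $\varphi(y^{-1})=\lambda y^{-1}+\mu$. Invariance of $\Omega_1$ forces $\mu=0$: the zeros of $y^{-1}$ are $q^2$ simple zeros in $\Omega_1$, whereas for $c\neq 0$ the function $y-c$ has a single zero of order $q^2$ if $c\in\Fq$, and zeros outside $\Omega_1$ otherwise. The same invariance gives $\lambda\in\Fq^\times$. So $A_P$ preserves $K(y)$, the kernel of the restriction is $\mathrm{Gal}(Z_q/K(y))=H_1$, and $|A_P|\le q^2(q-1)$.

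This route would avoid both the abelianness of $S$ and the appeal to the Artin--Mumford automorphism group, but none of these steps is in your proposal.
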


\begin{proof}
    Define $A := \aut(Z_q)$. We consider a particular place in $P \in \Omega_1$, namely the unique place of $Z_q = K(x,y,z)$ that is a pole of $x$ and $z$ and a zero of $y$. Note that the stabilizer of $P$ in $A$, $A_P$, contains both $H_1$ and $H_2$. By \cite[Theorem 11.49]{HKT} we can write 
    $$
        A_P = S \rtimes C,
    $$
    where $S$ is the unique $2$-Sylow subgroup of $A_P$ and $C$ is cyclic of odd order. All elements of $H_1$ are involutions, so $H_1$ is contained in $S$. Moreover, we may assume after perhaps conjugating, that $C$ contains $H_2$. We will show that in fact $S = H_1$ and $C = H_2$.
    
    Since $Z_q$ is ordinary, we know that the second ramification group is always trivial, so $S$ is finite abelian (see \cite[Theorem 2]{nakajima_p-ranks_1987} and \cite[Theorem 11.74]{HKT}). In particular, any subgroup of $S$ is normal. Consider the subgroup $H_1^0 := \{\gamma_{0,\nu} \mid \nu^q + \nu = 0\} \subseteq S$. By the above, $H_1^0$ is normal in $S$, and the fixed field of $H_1^0$ is  $K(x,y)$, so $S/H_1^0$ is isomorphic to a subgroup of $\aut(K(x,y))$. The automorphism group of this (generalized) Artin-Mumford curve is known (see \cite[Theorem 1.1] {MZ_gen_Artin-Mumford}). In particular, the only automorphisms of $K(x,y)$ of order 2 that fix the zero of $y$ are those of the form $x \mapsto x + \mu$, for $\mu \in \Fq$. It follows that $S/H_1^0$ has order at most $q$, so we must have $S = H_1$ and $|S| = q^2$.

    Next, we consider the cyclic group $C$. The fixed field of $S = H_1$ is $K(y)$, so $C \simeq A_P/S$ is isomorphic to a subgroup of $\aut(K(y))$. The zero of $y$ is fixed, so \cite[Theorem 11.91]{HKT} tells us that $A_P/S$ has exactly two short orbits, each of length one. The image of $H_2$ in $A_P/S$ gives rise to non-trivial automorphisms fixing the pole of $y$, so the pole of $y$ must be the other place fixed by $A_P/S$. Hence, the divisor of $y$ is fixed, so the only options are $y \mapsto \lambda y$, for $\lambda \in K^\times$. In fact, by Lemma \ref{lem_orbit_even}, $A_P$ acts on the $\F_q$-rational places of $K(x,y,z)$, so we must have $\alpha \in \Fq^\times$. This means $|C| = |A_P/S| \leq q-1$, so in fact $C=H_2$ and $|C| = q-1$.
    
    By combining the above we get that $A_P = G_P$ and $|A_P| = q^2(q-1)$.
\end{proof}

\begin{proof}[Proof of Theorem \ref{thm_aut_even}.]
    We write $A$ for $\aut(Z_q)$. Suppose $q> 8$ and consider some $P\in \Omega_1$. By Lemma \ref{lem_orbit_even}, Lemma \ref{lem_stabilizer_even}, and the orbit-stabilizer theorem we have 
    $$
        |A| = |\mathcal{O}_P| \cdot |A_P| = |\Omega_1| \cdot |G_P| = |G|.
    $$

    In particular, $A$ is equal to $G$, so the theorem follows from Lemma \ref{lem_G_aut_even}. 
\end{proof}

\begin{remark}
    For $q = 2$, computations using Magma show that the automorphism group contains the unique simple group of order 168 (while in this case $|G| = 2^3(2^2-1) = 24$). In fact, like in the previous example, $Z_2$ is isomorphic to the function field of the Klein quartic, so the group of order 168 is the full automorphism group.
    
    We leave it as an open problem to determine the full automorphism group in the cases $q=4$ and $q=8$. 
\end{remark}

\subsection{Automorphisms of the extended Zieve function field}

For $q = p^n$, with $n \in \mN$ and $p$ an odd prime, we claimed in Section \ref{sec_spicy_curve} that the ordinary function field $\widetilde{Z}_q$ has an automorphism group isomorphic to the semidirect product of an elementary abelian group of order $q^3$ and $\PGL(2,q)$. We will show that this is true, and that this group is the full automorphism group, except possibly for small values of $q$.

First, we recall that the automorphism group contains the following subgroups:
\begin{align*}
    E &:= \{ \sigma_{\alpha, \beta,\nu}: (x,y,z,t) \mapsto (x, y + \alpha, z + \beta, t + \nu) \mid \alpha, \beta, \nu \in \Fq \} \simeq E_{q^3}, \\
    H_1 &:= \{\gamma_{\mu} : (x,y,z,t) \mapsto (x + \mu, y, z + \mu^2 y + \mu t, t + 2\mu y) \mid \mu \in \Fq\} \simeq E_{q}, \\
    H_2 &:= \{ \delta_\lambda : (x,y,z,t) \mapsto (\lambda x, \lambda^{-1}y, \lambda z,t)\mid \lambda \in \Fq^\times \} \simeq C_{q-1} \text{ and }\\
    H_3 &:= \langle \pi : (x,y,z,t) \mapsto (-1/x, z, y,-t) \rangle \simeq C_2.
\end{align*}

Moreover, if we define 
$$
    G := \langle E, H_1, H_2, H_3 \rangle \subseteq \aut(\tZ_q),
$$

then a direct check shows that $E$ is normal in $G$. As with $Z_q$, we will determine the structure of $G$ and then show that $\aut(\tZ_q)$ contains nothing more than $G$. The goal is to prove the following theorem:

\begin{theorem}\label{thm_aut_odd}
    For $q > 37$, we have 
    $$
        \aut(\tZ_q) \simeq E_{q^3} \rtimes PGL(2,q).
    $$
    In particular, the order of the automorphism group is $q^4(q^2-1)$, so $|\aut(\tZ_q)| \sim g(\tZ_q)^{3/2}$.
\end{theorem}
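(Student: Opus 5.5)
The plan follows the template used above for $Z_q$. There are three steps: determine the structure of $G$, show that the set of places over the $\Fq$-rational places of $K(x)$ is an orbit of the full group, and compute the stabilizer of one such place.

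\textbf{Step 1: the structure of $G$.} I will show $G=E\rtimes H$ with $H\simeq\PGL(2,q)$. Modulo $E$, the elements $\gamma_\mu,\delta_\lambda,\pi$ act on $K(x)$ as $x\mapsto x+\mu$, $x\mapsto\lambda x$ and $x\mapsto -1/x$. These generate $\PGL(2,q)$ acting on $K(x)$. The kernel of $G\to\aut(K(x))$ is exactly $E$, because an automorphism fixing $x$ fixes $y^q-y$, $z^q-z$ and $t^q-t$. Hence $G/E\simeq\PGL(2,q)$ and $|G|=q^4(q^2-1)$.

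For the complement I will not use Dickson's presentation, since it only yields $\PSL(2,q)$ when $q$ is odd. Instead I will use Schur--Zassenhaus and Sylow arguments to find a complement, or check directly that the subgroup $\langle H_1,H_2,\pi\rangle$ meets $E$ trivially. For the direct check, the relevant commutators act on the vector $(y,z,t)$ through the $3$-dimensional representation $\mathrm{Sym}^2$ of $\mathrm{GL}(2,q)$, suitably twisted by the determinant. This suggests that $\langle H_1,H_2,\pi\rangle$ is exactly a linear copy of $\PGL(2,q)$ acting on the $\Fq$-span of $y,z,t$ together with $x$. Only the isomorphism type of $G/E$ and the order of $G$ are actually needed for the counting below.

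\textbf{Step 2: $\Omega_1$ is an orbit of $A:=\aut(\tZ_q)$.} Let $\Omega_1$ be the set of places over the $\Fq$-rational places of $K(x)$, so $|\Omega_1|=q^2(q+1)$; indeed each of the $q+1$ rational places is totally ramified in exactly one of the three successive extensions and splits into $q^2$ places. The short $G$-orbits are $\Omega_1$ and the set $\Omega_2$ of places over $\Fqq\setminus\Fq$-points, with $|\Omega_2|=q^3(q^2-q)$. Stabilizers in $G$ have orders $q^2(q-1)$ for $P\in\Omega_1$ and $q+1$ for $Q\in\Omega_2$.

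If the $A$-orbit of $P\in\Omega_1$ were larger than $\Omega_1$, the same orbit-stabilizer counting as in Lemma~\ref{lem_orbit_even} would give $|A|\gtrsim q^3\cdot q^3 = q^6$ up to lower order terms. This bound is realised already when the orbit meets only $\Omega_2$, via the lcm of the two stabilizer orders. Nakajima's bound gives $|A|\le 84g(g-1)\approx 84q^8$, which is too weak. So I will instead use the refined bound for ordinary curves whose automorphism group has a nontrivial $p$-subgroup, or the bound $|A_P|\le \frac{p}{p-2}(g-1)$ for the $p$-part of a stabilizer in the ordinary case, combined with the tame part.

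The intended argument is as follows. A Sylow $p$-subgroup of $A_P$ has order at most about $\frac{p}{p-2}q^4$, hence at most $q^4$ up to small factors, by \cite[Theorem 1(i)]{nakajima_p-ranks_1987}. The cyclic prime-to-$p$ part has order at most $4g+2$. This gives $|A_P|\le c\, q^8$, again too weak. So the real leverage will come from $E$ being normal, or at least from a Sylow-type argument showing that $E$ is normal in $A$. Then $A/E$ acts on $K(x)$, and Dickson's classification of finite subgroups of $\PGL(2,K)$ containing $\PGL(2,q)$ forces $A/E\simeq\PGL(2,q)$ or $\PGL(2,q^k)$. The latter is excluded because the branch locus is only the $q+1$ rational points.

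\textbf{The main obstacle} is exactly this: showing that $E$ (or $A$) is small enough. I expect the threshold $q>37$ to come from a comparison of the form $|A|\ge (|\Omega_1|+|\Omega_2|)\cdot\mathrm{lcm}$ against a sharper upper bound, for instance the Gunby--Smith--Yuan or Lia--Timpanella refinement of the order of magnitude, where the constants cross near $q=37$. I will try that comparison first.

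\textbf{Step 3: the stabilizer $A_P$.} Take $P$ to be a pole of $x$ that is a zero of $y$. Write $A_P=S\rtimes C$. Since the curve is ordinary, $S$ is abelian. The subgroup of translations in $z$ is normal in $S$, and its fixed field $K(x,y,t)$ is a modified-Zieve-type field. Quotienting successively by the translations in $z$ and $t$ reduces to $\aut(K(x,y))$, the Artin--Mumford field of \cite{MZ_gen_Artin-Mumford}. This bounds $|S|\le q^3$, so $S=E_P\cdot H_1$. Then $C$ acts on $K(y)$ fixing the zero and the pole of $y$, so it acts by $y\mapsto\lambda y$ with $\lambda\in\Fq^\times$, because $C$ preserves $\Omega_1$. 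This gives $|C|\le q-1$.

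Hence $|A|=|\Omega_1|\,|A_P|=|G|$, and $A=G$.
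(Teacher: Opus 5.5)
Your proposal has a genuine gap in Step~2, which is the heart of the proof, and it comes from a miscount.

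If the $\aut(\tZ_q)$-orbit of $P\in\Omega_1$ contains some $Q\notin\Omega_1$, it contains the whole $G$-orbit of $Q$. So it has size at least $|\Omega_1|+|\Omega_2|=q^5-q^4+q^3+q^2$. If $Q\in\Omega_2$, then $|A_P|=|A_Q|$ is divisible by $q^2(q-1)$ and by $q+1$, hence by $q^2(q^2-1)/2$. This gives
\[
|A|\ \ge\ \tfrac12\,(q^9-q^8+2q^6-q^5-q^4)\ \sim\ \tfrac12\, q^9 ,
\]
while a long $G$-orbit gives $|A|\ge \bigl(q^2(q+1)+q^4(q^2-1)\bigr)q^2(q-1)\sim q^9$. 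The lower bound is therefore of order $q^9$, not $q^6$.

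With your $q^6$, no upper bound of order $g^2\sim q^8$ (Nakajima, Gunby--Smith--Yuan, Lia--Timpanella) can give a contradiction. That is why you were pushed towards proving $E\trianglelefteq \aut(\tZ_q)$, but that normality is not argued and is essentially the whole content of the theorem. With the correct count, Nakajima's $84g(g-1)\sim 84q^8$ already suffices, but only for $q\gtrsim 167$.

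The missing ingredient for the stated range is the Montanucci--Speziali bound $|A|<821.37\,g^{7/4}$ for ordinary function fields with $p$ odd and $g$ even, which is what the paper uses. Here $g=q^4-q^3-q^2+1$ is even for odd $q$. Comparing $\tfrac12 q^9$ with roughly $821.37\,q^7$ is exactly where the threshold $q>37$ comes from.

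Step~3 also fails as written. The $t$-translations do not fix $P$: over the pole of $x$ the function $(x^q+x)/(x^q-x)$ takes the value $1$, so that place splits in $K(x,t)/K(x)$ and the $t$-translations permute the places above it. The same holds for the $y$-translations. Hence $E_P$ is only the group $E^0$ of $z$-translations, of order $q$. You therefore cannot quotient $S$ by the $t$-translations to reach $\aut(K(x,y))$. Even a bound $|S|\le q^3$ would not yield $S=E_PH_1$, which has order $q^2$.

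Moreover, the fixed field of $S=E^0H_1$ is $K(y,\,t-2xy)$, of index $q^2$, not $K(y)$, which has index $q^3$. So ``$C$ acts on $K(y)$'' is not available. The paper instead argues, from $\Omega_1$ being an $\aut(\tZ_q)$-orbit and the orbits of $S'=E^0H_1$, that $A_P$ preserves the divisor of $y$, so $y\mapsto\lambda y$ with $\lambda\in\Fq^\times$.

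In Step~1, Schur--Zassenhaus does not apply since $\gcd(q^3,q(q^2-1))=q$. Your linear idea does work, though, and is simpler than the paper's use of Dickson's relations. Each generator of $H=\langle H_1,H_2,\pi\rangle$ maps $V=\Fq y+\Fq z+\Fq t$ into itself with no constant term. Nontrivial elements of $E$ add nonzero constants, and $1,y,z,t$ are $K(x)$-linearly independent. Hence $H\cap E=1$ and $H\simeq HE/E=G/E\simeq\PGL(2,q)$.
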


We will again split the proof into several lemmas. The first lemma gives the structure of $G$.

\begin{lemma}\label{lem_G_aut_odd}
    We have 
    $$
        G = E \rtimes H,
    $$
    for a subgroup $H \subseteq G$, with $H \simeq \PGL(2,q)$.
\end{lemma}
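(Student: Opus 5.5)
I will mirror the proof of Lemma \ref{lem_G_aut_even}. The first step is to find the complement $H$. Here $H_1=\{\gamma_\mu\}$ is already a group: one checks $\gamma_\mu\gamma_\lambda=\gamma_{\mu+\lambda}$ from the explicit formulas, since the cross terms $\mu^2y+\mu t$ compose correctly with $t\mapsto t+2\mu y$. Indeed $(\mu+\lambda)^2 y = \mu^2 y+\lambda^2 y + 2\mu\lambda y$, and the $2\mu\lambda y$ term is produced when $\mu t$ is applied after $t\mapsto t+2\lambda y$.

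So I set $H:=\langle \pi, H_1, H_2\rangle$. By Dickson's presentation it may suffice to take $H:=\langle \pi, H_1\rangle$, with the caveat that for odd $q$ this generates only $\PSL(2,q)$ on $K(x)$. Note that $\pi$ acts on $x$ as $x\mapsto -1/x$, which has determinant $1$, and translations also have determinant $1$. I therefore include $H_2$ as well, which contributes the non-square scalings. Let $S_\mu:=\gamma_\mu$, $T:=\pi$ and $D_\lambda:=\delta_\lambda$.

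The second step is to show that the natural map $\rho\colon G\to \aut(K(x))$ has kernel exactly $E$. An automorphism fixing $x$ fixes $u=1/(x^q-x)$, $v=x^{q+1}/(x^q-x)$ and $w=(x^q+x)/(x^q-x)$. Hence it sends $y\mapsto y+\alpha$, $z\mapsto z+\beta$, $t\mapsto t+\nu$, and $\alpha,\beta,\nu\in\Fq$ because $[\tZ_q:K(x)]=q^3$, so it lies in $E$. The images $\rho(\gamma_\mu)\colon x\mapsto x+\mu$, $\rho(\delta_\lambda)\colon x\mapsto\lambda x$ and $\rho(\pi)\colon x\mapsto -1/x$ generate $\PGL(2,q)$. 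This gives $G/E\simeq\PGL(2,q)$, so $|G|=q^3\cdot q(q^2-1)$.

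The remaining, and main, step is to produce a subgroup $H\le G$ with $H\cap E=1$ that maps onto $\PGL(2,q)$. The approach is to verify that the generators satisfy a defining presentation of $\PGL(2,q)$ inside $\aut(\tZ_q)$, not just modulo $E$. Then $H$ is a quotient of $\PGL(2,q)$ surjecting onto $\PGL(2,q)$, hence isomorphic to it, with $H\cap E=\ker\rho|_H=1$.

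For the presentation I would use $S_\mu S_\lambda=S_{\mu+\lambda}$, $D_\lambda D_{\lambda'}=D_{\lambda\lambda'}$, $D_\lambda S_\mu D_\lambda^{-1}=S_{\lambda\mu}$, $T^2=1$, $TD_\lambda T=D_{\lambda^{-1}}$, and Dickson's long relation (iii), suitably adapted to $x\mapsto -1/x$. The commutation checks are routine on $y,z,t$. For example, $\delta_\lambda\gamma_\mu\delta_\lambda^{-1}$ sends $z\mapsto z+\lambda^{2}\mu^2 y+\lambda\mu t$, using $y\mapsto\lambda^{-1}y$ and $z\mapsto\lambda z$, and this equals $\gamma_{\lambda\mu}$. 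Also $\pi^2=1$: it sends $t\mapsto t$ and $y\mapsto y$ after the swap, and the $x$-component is $-1/(-1/x)=x$.

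The hard part is the Dickson relation (iii), in the form $(S_1T)^3=1$ or its longer version, evaluated on $z$ and $t$. I expect the translation constants to cancel exactly because of the specific coefficients $\mu^2$, $\mu$, $2\mu$ and the sign in $t\mapsto -t$. If a stray element of $E$ appears, I would replace $\pi$ or $\gamma_\mu$ by $\pi\sigma$ or $\gamma_\mu\sigma_\mu$ with suitable $\sigma\in E$, as was done with $\nu_0$ in the even case, to kill it.

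An alternative that avoids the long relation is a cohomological argument: since $\gcd(q^3,|\PGL(2,q)|)\ne1$, Schur–Zassenhaus fails, so the extension does not split automatically. Hence I would stick with the explicit relations. A sanity check on orders: $G=E\rtimes H$ has order $q^3\cdot q(q^2-1)=q^4(q^2-1)$, which matches Theorem \ref{thm_aut_odd}.
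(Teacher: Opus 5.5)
Your proof is correct in outline and takes a different route from the paper in how it handles the torus $H_2$. The paper relies only on Dickson's presentation of $\PSL(2,q)$. It first shows $H'=\langle H_1,\pi\rangle\simeq\PSL(2,q)$. It then uses explicit words to check that $H_2$ normalizes $H'$ and that $\delta_{\lambda^2}\in H'$, which gives $|H|\le 2|H'|=|\PGL(2,q)|$, and the faithful action on $K(x)$ finishes the argument. You instead impose relations on $S_\mu,D_\lambda,T$ all at once. Each relation is a one-line check on $y,z,t$, and you avoid both Dickson's five-term relation and the long word for $\delta_\lambda^{-1}\pi\delta_\lambda$ inside $H'$. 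The price is that Dickson's theorem no longer covers your list, so you must explain why your relations bound the group.

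There are two points you left open; both close.

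\textbf{Why the relations bound the group.} Take $(S_1T)^3=1$ as the extra relation, and let $\Gamma$ be the abstract group your relations define. Write $U=\{S_\mu\}$, $D=\{D_\lambda\}$ and $B=UD$. Your relations give $|B|\le q(q-1)$, $TDT=D$, and $TS_1T=S_{-1}TS_{-1}$. Conjugating the last relation by $D_\lambda$ yields $TS_\mu T\in BTB$ for every $\mu\ne0$. Hence $B\cup BTB=B\cup UTB$ is closed under multiplication, so it is all of $\Gamma$, and
\[
|\Gamma|\le q(q-1)+q^2(q-1)=|\PGL(2,q)|.
\]
This bound, together with $\rho(H)=\PGL(2,q)$, is all you need to get $H\simeq\PGL(2,q)$ and $H\cap E=1$.

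\textbf{The key relation holds exactly.} No correction by $E$ is required, so your fallback is unnecessary. The automorphism $\gamma_1\pi$ sends $x\mapsto -1/(x+1)$ and acts on $Ky+Kz+Kt$ by $y\mapsto y+z+t$, $z\mapsto y$, $t\mapsto -t-2y$. Iterating gives
\[
y\mapsto y+z+t\mapsto z\mapsto y,\qquad t\mapsto -t-2y\mapsto -t-2z\mapsto t,
\]
and $z\mapsto y\mapsto y+z+t\mapsto z$, so $(\gamma_1\pi)^3=1$ in $\mathrm{Aut}(\widetilde{Z}_q)$. The relations $\pi^2=1$, $\pi\delta_\lambda\pi=\delta_{\lambda^{-1}}$ and the Borel relations also hold on the nose.
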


\begin{proof}
    We claim that $H = \langle H_1,H_2,\pi\rangle$ works. First, we consider $H' := \langle H_1, \pi \rangle$ and show that $H' \simeq \PSL(2,q)$. To see this, we proceed as in the proof of Lemma \ref{lem_G_aut_even} and check that the relations given in the Corollary to \cite[Theorem 278]{Dickson} hold for $T := \pi$ and $S_\mu := \gamma_\mu$ for $\mu \in \Fq$. This implies that $H'$ is isomorphic to a quotient of $\PSL(2,q)$. 

    On the other hand, the projection $G \to G/E$ induces an action of $H'$ on $\Fq(x)$, and from the explicit description of the automorphisms, we see that $H'$ in this way acts as $\PSL(2,q)$ on $\Fq(x)$. The kernel of the action is exactly $H' \cap E$, since any $\varphi \in H$ which fixes $x$ also fixes $y^q - y$ and $z^q - z$, and hence is in $E$. By combining the above we get both that $H' \simeq \PSL(2,q)$ and that the intersection of $E$ and $H'$ is trivial.

    The next step is to show that $H = H' H_2$. We check directly that 
    $$
        (\delta_\lambda)^{-1} \circ \gamma_\mu \circ \delta_\lambda = \gamma_{\lambda\mu} \in H',
    $$
    and
    $$
        (\delta_\lambda)^{-1} \circ \pi \circ \delta_\lambda = \pi \circ \gamma_{-1/\lambda} \circ \pi \circ \gamma_{1-\lambda} \circ \pi \circ \gamma_1 \circ \pi \circ \gamma_{\frac{\lambda-1}{\lambda}}\circ \pi \in H',
    $$
    for any $\mu \in \Fq$ and $\lambda \in \Fq^\times$. It follows that $H'H_2$ is a group, and hence $H = H'H_2$.

    From the above we also get that
    \begin{align*}
        \delta_{\lambda^2} &= \pi \circ \delta_{\lambda^{-1}} \circ \pi \circ \delta_\lambda \\
        &= \gamma_{-1/\lambda} \circ \pi \circ \gamma_{1-\lambda} \circ \pi \circ \gamma_1 \circ \pi \circ \gamma_{\frac{\lambda-1}{\lambda}}\circ \pi \in H'
    \end{align*}

    for $\lambda \in \F_q^\times$, so all elements of $H_2$ coming from squares of $\F_q^\times$ are also in $H'$. Using this, we obtain
    
    $$
        |H|  = \frac{|H'|\cdot |H_2|}{|H'\cap H_2|} \leq 2 |H'|.
    $$

    Since $H' \simeq \PSL(2,q)$, we conclude that 
    $$
        |H| \leq 2|\PSL(2,q)| = |\PGL(2,q)|.
    $$

    On the other hand, like previously, the projection $G \to G/E$ induces an action of $H$ on $\Fq(x)$, and from the explicit description of the automorphisms, we see that $H$ acts as $\PGL(2,q)$ on $\Fq(x)$. The kernel of the action is exactly $H \cap E$ by the same argument as for $H'$. By combining this with the bound on the size of $H$ we get both that $H \simeq \PGL(2,q)$ and that $E$ and $H$ have trivial intersection, so $E \rtimes H$ is a subgroup of $G$ of order $q^4(q^2+1)$. 

    Finally, we repeat the argument from the end of the proof of Lemma \ref{lem_G_aut_even} to obtain $|G| = q^4(q^2+1)$, and hence $G = E \rtimes H$ as wished. 
\end{proof}

We proceed, like for $q$ even, to show that $A := \aut(\tZ_q)$ contains no more automorphisms than those in $G$. To this end, we study the short orbits of the action of $G$ on the places of $K(x,y,z,t)$. 

Denote by $\Omega_1$ the set of places lying above the $\F_q$-rational places of $K(x)$, and let $\Omega_2$ be the set of places lying above the places of $K(x)$ that are $\Fqq$-rational but not $\Fq$-rational. An identical argument to the one used for $q$ even shows that the short orbits of $G$ are exactly $\Omega_1$ and $\Omega_2$. We note that $|\Omega_1| = q^2(q+1)$, $|\Omega_2| = q^3(q^2-q)$, and $|G| = q^4(q^2-1)$, so the stabilizer in $G$ of a place in $\Omega_1$ has order $q^2(q-1)$, and for a place in $\Omega_2$ the order of the stabilizer is $q+1$. We will use these facts to show that $\Omega_1$ is also an orbit under the full automorphism group, except possibly for small values of $q$:

\begin{lemma}\label{lem_orbit_odd}
    If $q > 37$ then $\Omega_1$ is an orbit under $\aut(\tZ_1)$.
\end{lemma}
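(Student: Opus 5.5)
The plan is to mirror the proof of Lemma \ref{lem_orbit_even}: compare a lower bound for $|A|$, where $A:=\aut(\tZ_q)$, coming from an enlarged orbit with Nakajima's bound $|A|\le 84g(g-1)$ for ordinary function fields. Here $g=q^4-q^3-q^2+1$, so the upper bound is of order $84q^8$. Fix $P\in\Omega_1$ and let $\mathcal{O}_P$ be its $A$-orbit. Since $\Omega_1$ is a $G$-orbit, $\mathcal{O}_P$ is a union of $G$-orbits containing $\Omega_1$. Suppose, for contradiction, that $\mathcal{O}_P\supsetneq\Omega_1$. By the description of the short orbits of $G$, $\mathcal{O}_P$ then contains either $\Omega_2$ or a long $G$-orbit of size $|G|=q^4(q^2-1)$.

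\emph{Case 1: $\mathcal{O}_P\supseteq\Omega_1\cup\Omega_2$.} Take $Q\in\Omega_2\cap\mathcal{O}_P$. Then $|A_P|=|A_Q|$. Moreover $|A_P|$ is divisible by $|G_P|=q^2(q-1)$ and $|A_Q|$ by $|G_Q|=q+1$. Since $\gcd(q-1,q+1)=2$ for odd $q$, we get
$$
|A_P|\ge \frac{q^2(q^2-1)}{2}.
$$
Hence
$$
|A|\ge \bigl(q^2(q+1)+q^4(q-1)\bigr)\frac{q^2(q^2-1)}{2},
$$
which is of order $q^9/2$.

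\emph{Case 2: $\mathcal{O}_P$ contains a long $G$-orbit.} Then $|\mathcal{O}_P|\ge q^2(q+1)+q^4(q^2-1)$ and $|A_P|\ge q^2(q-1)$, giving
$$
|A|\ge \bigl(q^2(q+1)+q^4(q^2-1)\bigr)q^2(q-1),
$$
which is of order $q^9$. Taking the smaller of the two lower bounds and comparing it with $84g(g-1)$ gives a contradiction for all sufficiently large $q$.

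The main obstacle is the constant. The crude comparison above forces $q$ of size roughly $150$ in Case 1, not $q>37$. To reach the stated threshold, I would first sharpen Case 1 using divisibility. Since $\mathcal{O}_P\supseteq\Omega_1\cup\Omega_2$ and $|A|$ is a multiple of $|G|=q^4(q^2-1)$, the product $|\mathcal{O}_P|\cdot|A_P|$ must be a multiple of $q^4(q^2-1)$. Also, $|A_P|$ divisible by $q^2(q^2-1)/2$ together with Nakajima's bound on the $p$-part of a point stabilizer limits the possibilities; here I would invoke \cite[Theorem 1 (i)]{nakajima_p-ranks_1987}, as in Lemma \ref{lem_stab_Singer}, which gives $|S|\le \tfrac{p}{p-2}(g-1)$ for the Sylow $p$-subgroup $S$ of $A_P$. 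If that is still insufficient, I would replace $84g(g-1)$ by the refined bounds of \cite{Gunby_Smith_Yuan_2015} or \cite{Lia_Timpanella_2021}. I expect that, with the refined constant, the inequality fails exactly when $q>37$. The finitely many remaining odd prime powers are excluded by the hypothesis.
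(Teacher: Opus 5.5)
Your orbit analysis and both lower bounds are correct and match the paper's. If $\Omega_2\subseteq\mathcal{O}_P$ you get $|A|\ge \tfrac12(q^9-q^8+2q^6-q^5-q^4)$, and if $\mathcal{O}_P$ meets a long $G$-orbit you get $|A|\ge q^9-q^8-q^7+2q^6-q^4$.

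\textbf{The gap is the upper bound.} Against $84g(g-1)\approx 84q^8$, the first lower bound only gives a contradiction from about $q=167$ on. So every odd prime power with $41\le q\le 163$ is left open, and all of these are covered by the hypothesis $q>37$. Your closing sentence, that the remaining prime powers are excluded by the hypothesis, is therefore wrong. None of your proposed repairs closes the gap:
\begin{enumerate}[label=(\roman*)]
\item Divisibility of $|\mathcal{O}_P|\cdot|A_P|$ by $|G|$ is just Lagrange's theorem applied to $G\le A$, so it adds nothing.
\item Nakajima's bound $\tfrac{p}{p-2}(g-1)$ on the Sylow $p$-subgroup of $A_P$ is of order $q^4$. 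The $p$-part forced on $|A_P|$ here is only $q^2$, so the bound never bites.
\item Any bound of the form $c\,g^2$, compared with a lower bound of order $\tfrac12 q^9$, gives a threshold near $q\approx 2c$. Saying you ``expect the inequality fails exactly when $q>37$'' is a guess. To make this route work you would have to name a specific bound, check its hypotheses, and do the arithmetic.
\end{enumerate}

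\textbf{The missing ingredient is a sub-quadratic bound.} Since $q$ is odd, each of $q^4,q^3,q^2$ is odd, so $g=q^4-q^3-q^2+1$ is even. Together with $p$ odd and $\tZ_q$ ordinary, this means the Montanucci--Speziali bound $|\aut(\tZ_q)|<821.37\,g^{7/4}$ from \cite{Montanucci_Speziali_odd_char_2020} applies, and it is of order $q^7$. Compare it with $\tfrac12(q^9-q^8+2q^6-q^5-q^4)\ge\tfrac12 q^8(q-1)$, using the exact value of $g$ rather than $g<q^4$. This gives a contradiction for every $q\ge 41$ but not for $q=37$, which is exactly where the hypothesis $q>37$ comes from. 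The long-orbit case then follows automatically, because its lower bound is larger. This is how the paper's proof concludes.
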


\begin{proof}
    We write $A  = \aut(Z_q)$ for brevity. Consider some $P \in \Omega_1$ and the corresponding $A$-orbti, $\mathcal{O}_P$. Suppose for contradiction that $\mathcal{O}_P$ contains a place, $Q$, which is not in $\Omega_1$. We distinguish between two different cases:

    Suppose $Q$ is also in $\Omega_2$. Then the order of the stabilizer of $P$ in $A$ has order divisible by both $q^2(q+1)$ and $q-1$, i.e., it is at least $q^2(q^2-1)/2$. Therefore, the orbit-stabilizer theorem implies
    
    \begin{align*}
        |A| &\geq (|\Omega_1| + |\Omega_2|)\cdot \mathrm{lcm}(|G_P|,|G_Q|) \\
        &= \frac{1}{2}(q^5-q^4 +q^3 + q^2)(q^2(q^2-1)) \\
        &= \frac{1}{2}(q^9 - q^8 + 2 q^6 - q^5 - q^4).
    \end{align*}

    If instead $Q$ is in $\mathcal{O}_P\setminus (\Omega_1 \cup \Omega_2)$, then the orbit-stabilizer theorem yields 
    
    \begin{align*}
        |A| &\geq (|\Omega_1| + |G|)\cdot |G_P| \\
        &= (q^6 - q^4 + q^3 + q^2)(q^2(q-1)) \\
        &= q^9 - q^8 - q^7 + 2 q^6 - q^4.
    \end{align*}

    On the other hand, since $\tZ_q$ is ordinary, $p$ is odd, and the genus $g = q^4 - q^3 - q^2 + 1$ is even, we have from \cite{Montanucci_Speziali_odd_char_2020} that
    $$
        |A| \leq 821.37 \cdot g^{7/4} = 821.37(q^4-q^3-q^2 + 1)^{7/4}.
    $$
    By comparing the inequalities, we obtain a contradiction for $q > 37$, and the claim follows. 

\end{proof}

Next, we turn our attention to the stabilizer in $A$ of a place in $\Omega_1$:

\begin{lemma}\label{lem_stabilizer_odd}
    If $q > 37$, then the stabilizer in $\aut(\tZ_q)$ of a place $P \in \Omega_1$ is equal to the stabilizer of $P$ in $G$. In particular, the order of the stabilizer is $q^2(q-1)$.
\end{lemma}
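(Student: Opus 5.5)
The plan is to follow the proof of Lemma \ref{lem_stabilizer_even} as closely as possible, with one extra step needed because the $p$-part of $G_P$ is no longer the Galois group of a single Artin-Schreier layer.

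\textbf{Setup and $G_P$.} First I fix a convenient place $P\in\Omega_1$ over the pole of $x$. At $x=\infty$ the right hand sides for $y$ and $t$ are regular, with values $0$ and $1$. So over the pole of $x$ the extensions in $y$ and $t$ split, while $z$ is totally ramified. This gives $q^2$ places over $x=\infty$, and the stabilizer of $P$ in $E$ is $N:=\{\sigma_{0,\beta,0}\}\simeq E_q$. I take $P$ to be a zero of $y$. Then $H_1$ and $H_2$ fix $P$: they preserve the zero of $y$, and $\gamma_\mu$ changes $t$ by $2\mu y$, which vanishes at $P$. Hence $G_P=(N H_1)\rtimes H_2$, of order $q^2(q-1)$.

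By \cite[Theorem 11.49]{HKT} I write $A_P=S\rtimes C$, with $S$ the Sylow $p$-subgroup and $C$ cyclic of order prime to $p$. After conjugating I may assume $NH_1\le S$ and $H_2\le C$. Since $\tZ_q$ is ordinary, the second ramification groups are trivial, so $S$ is abelian by \cite[Theorem 2]{nakajima_p-ranks_1987} and \cite[Theorem 11.74]{HKT}.

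\textbf{Bounding $S$.} Because $S$ is abelian, $S_0:=NH_1$ is normal in $S$, and $S/S_0$ embeds in the automorphism group of the fixed field $F_0$ of $S_0$, fixing the place below $P$. I would compute $F_0$ explicitly. The element $y$ is invariant under $S_0$, and so is $w:=t-2xy$, since $\gamma_\mu(w)=t+2\mu y-2(x+\mu)y=w$. Hence $F_0\supseteq K(y,w)$, and a degree count ($[\tZ_q:K(y,w)]$ should be $q^2$) should give equality. Expanding $w^q-w$ in terms of $x$ and $y$ then yields a plane model of $F_0$ over $K(y)$.

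From this model I want to show that a $p$-element of $\aut(F_0)$ fixing the relevant place and commuting with the induced structure must be trivial. For this I would use either the genus and $p$-rank of $F_0$ together with Nakajima's bound, or a direct argument as in the Zieve case, where only the translations $x\mapsto x+\mu$ survived. This gives $S=S_0$ and $|S|=q^2$. \emph{This is the step I expect to be the main obstacle}, since it requires identifying $F_0$ precisely enough to control its Sylow stabilizers.

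\textbf{Bounding $C$.} As in Lemma \ref{lem_stabilizer_even}, $C\simeq A_P/S$ acts on $F_0$ and fixes the place under $P$. If $F_0$, or a suitable subfield such as $K(y)$ that is stable under $C$, is rational, then by \cite[Theorem 11.91]{HKT} the group $C$ has exactly two fixed places there. These are the zero and the pole of $y$, the latter because $H_2\le C$ fixes it. So $C$ acts as $y\mapsto\lambda y$. If rationality fails, I would instead argue as in Lemma \ref{lem_stab_Singer}: take the Riemann-Roch space $\mathcal L(kP)$ for the smallest $k$ for which it contains a function built from $y$ and $w$, and use that $C$ acts diagonalizably on it since $p\nmid |C|$.

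Next, Lemma \ref{lem_orbit_odd} shows that $A_P$ permutes $\Omega_1$, and $\Omega_1$ consists of the places over the $\Fq$-rational places of $K(x)$. This should force $\lambda\in\Fq^\times$, so $|C|\le q-1$, hence $C=H_2$.

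Combining the two bounds gives $A_P=G_P$, of order $q^2(q-1)$.
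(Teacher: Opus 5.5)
There is a genuine gap. It sits in the step you yourself flag as the main obstacle, and it carries over to your treatment of $C$.

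\textbf{Where the argument stops.} Your reduction is fine: $S/S_0$ embeds in $\aut(F_0)$, and the degree count works. Indeed $w^q-w=-1-2(x^q-x)y=-\frac{y^q+y}{y^q-y}$, so $F_0=K(y,w)\cong K(x,t)$. This is an ordinary function field of genus $(q-1)(q-2)$, and $P_0$ is one of the $q$ unramified places over $y=0$.

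But neither tool you propose shows that no nontrivial $p$-automorphism of $F_0$ fixes $P_0$.
\begin{itemize}
\item Nakajima's bound for a Sylow $p$-subgroup of a place stabilizer in $\aut(F_0)$ is $\frac{p}{p-2}(g(F_0)-1)$. This is of order $q^2$, so it does not force $S/S_0=1$.
\item In Lemma \ref{lem_stabilizer_even} the fixed field was a generalized Artin--Mumford curve whose automorphism group is known. Here you have no description of $\aut(F_0)$ to fall back on.
\end{itemize}
The step for $C$ has the same defect. First, $C$ acts on $F_0$ only once $S=S_0$ is known. Second, $F_0$ is not rational, and $C$-stability of $K(y)$ is exactly what has to be proved. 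Third, a Riemann--Roch argument at $P_0$ does not reach $y$: its zero divisor in $F_0$ consists of $q$ places, and only $P_0$ is known to be $C$-fixed.

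\textbf{The missing idea.} The paper uses Lemma \ref{lem_orbit_odd} at this stage, which removes the need for $F_0$ altogether. Since $\Omega_1$ is an $\aut(\tZ_q)$-orbit, any element of $A_P$ normalizing $S_0$ permutes the $S_0$-orbits inside $\Omega_1$ and preserves their lengths. These orbits are:
\begin{itemize}
\item the $q$ zeros of $y$, each fixed by $S_0$;
\item the $q(q-1)$ places over $x=\infty$ with $y\in\Fq^\times$, in orbits of length $q$, because $\gamma_\mu$ shifts the value of $t$ by $2\mu y$;
\item the $q^3$ poles of $y$, in orbits of length $q^2$.
\end{itemize}
Since $(y)=q^2\sum(\text{zeros})-\sum(\text{poles})$, such an element maps $y\mapsto\lambda y$.

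Apply this to $S$, which normalizes $S_0$ because $S$ is abelian. Here $\lambda=1$, since it has $p$-power order. An automorphism fixing $y$ fixes $x^q-x$, so it sends $x\mapsto x+\mu$ with $\mu\in\Fq$, and hence lies in $H_1E\subseteq G$. Thus $S\le G_P$, and so $S=S_0$.

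Now $S_0\trianglelefteq A_P$, so the same argument gives $\varphi(y)=\lambda_\varphi y$ for every $\varphi\in C$. Your use of Lemma \ref{lem_orbit_odd} then gives $\lambda_\varphi\in\Fq^\times$. The map $\varphi\mapsto\lambda_\varphi$ is injective on $C$: an element of its kernel fixes $y$, so it lies in $G_P$ with trivial $H_2$-component, i.e.\ in the $p$-group $S_0$, and it has order prime to $p$. Hence $|C|\le q-1$ and $C=H_2$.
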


\begin{proof}
    Define $A := \aut(\widetilde{Z}_q)$ and choose $P\in \Omega_1$ among the $q$ places that are poles of $x$ and $z$, and zeros of $y$. We can write 
    $$
        A_P = S \rtimes C,
    $$
    where $S$ is the unique $p$-Sylow subgroup of $A_P$ and $C$ is cyclic of odd order. Again, $S$ is abelian since $\tZ_q$ is ordinary (see \cite[Theorem 2]{nakajima_p-ranks_1987} and \cite[Theorem 11.74]{HKT}), so in particular every subgroup is normal.
    
    Consider the group $E^0 := \{\sigma_{0,\beta,0} \mid \beta \in \Fq\} \subseteq E$. It is clear that $E^0$ is in $S$, and so is $H_1$. Moreover, one can check that $E^0 H_1 = H_1H_q^0$, so $S' := E^0 H_1$ is a subgroup of $S$ of order $q^2$. Since $S'$ is normal in $S$, we know that $S$ acts on the $S'$-orbits of the places of $\widetilde{Z}_q$. 
    
    From Lemma \ref{lem_orbit_odd} we know that $A$, and hence $S$, acts on the set of zeros and poles of $y$ in $\widetilde{Z}_q$, since this is exactly $\Omega_1$. By the above, this means that $S$ acts on the $S'$-orbits of the poles and zeros of $y$ in $\widetilde{Z}_q$. The $S'$-orbit of each zero of $y$ in $\widetilde{Z}_q$ has length one, while the $S'$-orbit of each pole of $y$ has length $q^2$. Hence, $S$ acts on the zeros and poles of $y$ separately, i.e., $S$ fixes the divisor of $y$. Since the elements of $S$ have order $p$, this means that $S$ fixes $y$. Any automorphism fixing $y$ is already in $G$, and we know that $|G_P| = q^2(q-1)$, so we conclude that $S' = S$. \newline 
    
    Next, we consider the cyclic group $C$. Now that we know $S$, which is normal in $A_P$, we can repeat the previous argument to see that $A_P$ fixes the divisor of $y$. This means that any automorphism in $A_P$ maps $y$ to $\lambda y$, for some $\lambda \in K$. Since $\Omega_1$ is an $A$-orbit, we must have $\lambda \in \F_q$, and this means that any $\varphi \in C$ satisfies $\varphi^{q-1}(y) = y$. Like previously, this implies $\varphi^{q-1}(x) = x + \lambda$, for some $\lambda \in \F_q$, but since the order of $\varphi^{q-1}$ is coprime with $q$ we must have $\lambda = 0$. A similar argument for $z$ and $t$ shows that $\varphi^{q-1}$ is the identity. Hence, $|C| = q-1$ and the claim follows.  
\end{proof}

Finally, we combine the above to obtain a proof of the main theorem of this section:

\begin{proof}[Proof of Theorem \ref{thm_aut_odd}.]
    We write $A$ for $\aut(\widetilde{Z}_q)$. Suppose $q> 37$, and consider some $P\in \Omega_1$. By applying the orbit-stabilizer theorem together with Lemma \ref{lem_orbit_odd} and Lemma \ref{lem_stabilizer_odd} we get 
    $$
        |A| = |\mathcal{O}_P| \cdot |A_P| = |\Omega_1| \cdot |G_P| = |G|.
    $$

    In particular, $A$ is equal to $G$ so the theorem follows from Lemma \ref{lem_G_aut_odd}. 
\end{proof}

\begin{remark}
    For $q\leq 37$, new arguments would be needed to determine the full automorphism group, and possibly prove that it still has size $q^4(q^2-1)$; we leave it as an open problem.
\end{remark}

\bibliographystyle{abbrv}
\bibliography{ref}

\end{document}